\DeclareMathOperator{\esssup}{ess\,sup}
\DeclareMathOperator{\diameter}{diam}
\newcommand{\dist}{{\operatorname{dist}}}
\DeclareMathOperator{\diam}{diam}
\DeclareMathOperator{\interior}{int}
\newtheorem{theorem}{Theorem}[section]
\newtheorem{lemma}[theorem]{Lemma}
\newtheorem{definition}[theorem]{Definition}
\newtheorem{remark}[theorem]{Remark}
\numberwithin{equation}{section}
\def\Xint#1{\mathchoice 
 {\XXint\displaystyle\textstyle{#1}}%
{\XXint\textstyle\scriptstyle{#1}}%
{\XXint\scriptstyle\scriptscriptstyle{#1}}%
 {\XXint\scriptscriptstyle\scriptscriptstyle{#1}}%
 \!\int}
\def\XXint#1#2#3{{\setbox0=\hbox{$#1{#2#3}{\int}$}
 \vcenter{\hbox{$#2#3$}}\kern-.5\wd0}}
 \def\dashint{\Xint-}
\DeclareMathOperator{\essinf}{ess inf}
\title{Capacity and Hausdorff measure in Musielak-Orlicz-Sobolev spaces}
\author{ Ankur Pandey, Nijjwal Karak,  Debarati Mondal}
\thanks{Ankur Pandey gratefully acknowledges Council of Scientific and Industrial Research (CSIR) for awarding Junior Research Fellowship (e-certificate no. 22D/23J05002) and Debarati Mondal also acknowledges Council of Scientific and Industrial Research (CSIR) for awarding Senior Research Fellowship (file no. 09/1026(18426)/2024-EMR-I)}
\begin{document}

\begin{abstract}
   In this paper, we show that sets with zero Sobolev $p(\cdot)$-capacity have generalized Hausdorff $h(\cdot)$-measure zero, for some gauge function $h(\cdot).$ We also prove that sets with zero Musielak-Orlicz-Sobolev $\Phi(\cdot,\cdot)$-capacity, for a particular class of functions $\Phi(\cdot,\cdot),$ have generalized Hausdorff $h(\cdot)$-measure zero, for a suitable gauge function $h(\cdot).$ These results can be seen as improvements of the earlier results in \cite{BHH18} and \cite{HHKV03}.
\end{abstract}
\maketitle
\indent Keywords: Variable exponent Sobolev spaces, Musielak-Orlicz-Sobolev spaces, Capacity, Hausdorff measure. \\
\indent 2020 Mathematics Subject Classification: 46E35, 46E30.
\section{Introduction}
Sobolev $p(\cdot)$-capacity in variable exponent Sobolev spaces has been introduced by Harjulehto, H\"ast\"o, Koskenoja and Varonen in \cite{HHKV03}, and many basic properties have also been established there. Let us recall the definition here.
\begin{definition}
    Let $E\subset\mathbb{R}^n,$ $n\geq 2.$ The Sobolev $p(\cdot)$-capacity of $E$ is defined by
    \begin{equation*}
        C_{p(\cdot)}(E):= \inf_{u\in S_{p(\cdot)}(E)}\int_{\mathbb{R}^n} |u|^{p(x)}+ |\triangledown u|^{p(x)} dx,
    \end{equation*}
    where $ S_{p(\cdot)}(E)=\{u \in W^{1,p(\cdot)}(\mathbb{R}^n):u\geq 1  $ in an open set containing $E$ and $u\geq 0\}.  $
\end{definition}
\noindent It was proved in the same paper that the Sobolev $p(\cdot)$-capacity is an outer capacity, and it also satisfies the Choquet proeprty when the exponent $p(\cdot)$ is bounded away from $1$ and $\infty.$ It was further studied by Harjulehto, H\"ast\"o \cite{HH04}, Harjulehto, H\"ast\"o, Koskenoja, Varonen \cite{HHKV05}, Harjulehto, H\"ast\"o, Latvala \cite{HHL04}, and Mashiyev \cite{Mas08}. Relative Sobolev capacity in variable exponent Sobolev space has also been studied and compared with the Sobolev $p(\cdot)$-capacity in \cite{HHK07}.\\

In this paper, we study the relation between the Sobolev $p(\cdot)$-capacity and the generalized Hausdorff measure. Recall that, for the classical Sobolev space $W^{1,p}(\mathbb{R}^n),$ sets of $p$-capacity zero have generalized Hausdorff $h$-meaure zero provided that
\begin{equation*}
    \int_0^1(t^{p-n}h(t))^{\frac{1}{p-1}}\,\frac{dt}{t}<\infty,
\end{equation*}
for $1<p\leq n,$ see \cite[Theorem 5.1.13]{AH96} or \cite[Theorem 7.1]{KM72}. See also \cite[Chapter 4.7.2]{EG92} for more related results. For the variable exponent Sobolev space, the authors in \cite{HHKV03} proved that sets with zero $p(\cdot)$-capacity have $s$-Hausdorff measure zero for all $s>n-p^-,$ where $p^-:=\essinf_{x\in \mathbb{R}^n}p(x).$ This result was improved in the book \cite{DHHR11} using the variable Hausdorff measure.
\begin{theorem}[Theorem 10.4.7, \cite{DHHR11}]
Suppose that $p:\mathbb{R}^n\rightarrow [1,n]$ satisfies both the log-H\"older continuity and the log-H\"older decay condition and let $s:\mathbb{R}^n\rightarrow (0,n]$ be continuous with $s(\cdot)>n-p(\cdot).$ If $C_{p(\cdot)}(E)=0,$ then $H^{s(\cdot)}(E)=0.$    
\end{theorem}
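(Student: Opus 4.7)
The proof plan follows the classical capacity-to-Hausdorff-measure strategy (compare \cite[Theorem 5.1.13]{AH96}), adapted to the variable exponent setting. First, by countable subadditivity of both $C_{p(\cdot)}$ and $H^{s(\cdot)}$, I reduce to the case where $E$ is bounded. Since $C_{p(\cdot)}$ is an outer capacity, for every $\varepsilon > 0$ there exists an admissible function $u \in S_{p(\cdot)}(E)$, which I may assume satisfies $0 \leq u \leq 1$ after truncation, $u = 1$ on an open neighbourhood $U$ of $E$, and $\int_{\mathbb{R}^n}(|u|^{p(x)} + |\nabla u|^{p(x)})\, dx < \varepsilon$. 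I replace $u$ by a quasi-continuous representative so that $\{u > 1/2\}$ is open and contains quasi-every point of $E$.

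Next, I build a Vitali-type cover of $E$ by small balls. For each $x \in E$ fix a radius $r_x \in (0,\delta)$ chosen by a stopping-time argument so that
\[
r_x^{s(x) - n} \leq c \int_{B(x, r_x)} \bigl(|u|^{p(y)} + |\nabla u|^{p(y)}\bigr)\, dy.
\]
By the $5r$-covering lemma, extract a countable pairwise disjoint subfamily $\{B(x_i, r_i)\}$ whose $5$-enlargements still cover $E$. The crucial local estimate, obtained from a variable-exponent Sobolev--Poincar\'e inequality on each ball (valid thanks to the log-H\"older continuity of $p(\cdot)$, which permits replacing $p(y)$ by $p(x_i)$ on $B(x_i,r_i)$ up to a bounded constant), combined with the lower bound $u \geq 1/2$ on a definite portion of each ball, yields
\[
r_i^{s(x_i)} \leq C \int_{B(x_i,r_i)} \bigl(|u|^{p(y)} + |\nabla u|^{p(y)}\bigr)\, dy.
\]
Disjointness of the balls then gives $\sum_i (5 r_i)^{s(x_i)} \leq C' \varepsilon$. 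Letting $\varepsilon \to 0$ and subsequently $\delta \to 0$ yields $H^{s(\cdot)}(E) = 0$.

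The main obstacle is making the ball-wise inequality quantitative with the \emph{correct} variable exponent $s(x_i)$ on the left-hand side (rather than a uniform majorant such as $s^-$). The log-H\"older continuity of $p(\cdot)$ is used precisely to control the distortion $r^{|p(y) - p(x_i)|} \asymp 1$ on $B(x_i, r_i)$, while the strict gap $s(x) > n - p(x)$ supplies the margin needed to absorb this distortion into the constants; the log-H\"older decay assumption keeps the Sobolev--Poincar\'e constants uniform across balls, though its role is softened once $E$ has been localized. Once this local inequality is secured, the remainder is essentially covering and outer-measure bookkeeping.
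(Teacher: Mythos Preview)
This theorem is quoted from \cite{DHHR11} and not given an independent proof in the present paper; the relevant comparison is therefore with the paper's proof of its sharpening, Theorem~\ref{main theorem 2} (and, secondarily, the quantitative Theorem~\ref{main theorem 1}).

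Your route --- take a single near-optimal test function, produce for each $x\in E$ a radius $r_x$ at which the local energy dominates $r_x^{s(x)}$, then apply the $5r$-covering lemma and sum --- is \emph{not} the argument the paper uses for Theorem~\ref{main theorem 2}. There the authors instead stack a sequence of admissible $u_i$ with $\|u_i\|_{1,p(\cdot)}\le 2^{-i}$ to build a single $u\in W^{1,p(\cdot)}(\mathbb{R}^n)$ with $\|u\|_{1,p(\cdot)}\le 1$ and $\limsup_{r\to 0}\dashint_{B(x,r)}u=\infty$ for every $x\in E$. A telescoping Poincar\'e estimate over dyadic annuli then shows that finiteness of a weighted gradient norm forces the averages to converge; hence $E$ lies in the set where the weighted gradient modular blows up, and the Vitali-type density Lemma~\ref{main lemma} finishes. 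Your approach is in fact much closer to the paper's proof of Theorem~\ref{main theorem 1}, which does exactly the single-function/covering scheme you describe (for compact $E$) and yields the quantitative bound $\mathcal H^{h(\cdot)}(E)\le c\,C_{p(\cdot)}(E)$. Both methods work; yours is more direct and gives the quantitative inequality, while the paper's stacking argument cleanly separates the zero-capacity hypothesis from the geometric measure estimate via Lemma~\ref{main lemma}.

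That said, your sketch has a few genuine soft spots. First, the displayed inequality $r_x^{s(x)-n}\le c\int_{B(x,r_x)}(\cdots)\,dy$ cannot be what you want: for $s(x)<n$ the left side blows up as $r_x\to 0$, and in any case what you later use (correctly) is $r_i^{s(x_i)}\le C\int_{B(x_i,r_i)}(\cdots)\,dy$. Second, calling the selection of $r_x$ a ``stopping-time argument'' hides the actual work: the existence of such a radius is proved by \emph{contradiction} via a dyadic chain, exactly as in the paper's proof of Theorem~\ref{main theorem 1}. If no scale below a fixed $K_2$ satisfied the inequality, the Poincar\'e/H\"older telescoping would give
\[
1\le |u(x)-u_{B_1}|\le c\sum_{l\ge 1} 2^{-l\,\frac{s(x)-(n-p(x))}{p(x)}},
\]
and convergence of this geometric series (which is precisely where the hypothesis $s(x)>n-p(x)$ enters) yields the contradiction for suitable constants. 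Third, the phrase ``$u\ge 1/2$ on a definite portion of each ball'' is not the right mechanism: on sufficiently small balls around $x\in E$ one has $u\equiv 1$ and $\nabla u\equiv 0$, so the energy is zero there. What drives the estimate is the contrast between $\dashint_{B(x,r)}u\to 1$ as $r\to 0$ and $u_{B_1}=0$ at a fixed large scale after cutoff, bridged by the chain. Finally, the quasi-continuous representative is unnecessary here, since by definition of $S_{p(\cdot)}(E)$ the set $\{u\ge 1\}$ already contains an open neighbourhood of $E$.
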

\noindent Here we have further improved this result using generalized Hausdorff $h(\cdot)$-measure. See Section 2 for the definitions.
\begin{theorem}\label{main theorem 2}
   Suppose that $p:\mathbb{R}^n\rightarrow [1,n]$ satisfies both the log-H\"older continuity and the log-H\"older decay condition and let $h:\mathbb{R}^n\times (0, \infty)  \rightarrow (0, \infty) $ be given by $h(\cdot, t)=t^{n-p(\cdot)}\log^{-p^+-\delta}\left(\frac{1}{t}\right)$ for any $\delta>0$. If $C_{p(\cdot)}(E)=0$ for any $E \subset \mathbb{R}^n$, then $\mathcal H^{h(\cdot)}(E)=0$.  
\end{theorem}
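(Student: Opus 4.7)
\medskip
\noindent The plan is to adapt the classical Adams--Hedberg comparison between Sobolev capacity and Hausdorff measure to the variable-exponent setting, using log-H\"older continuity to treat $p(\cdot)$ as essentially constant on each small ball, and tracking the logarithmic correction carefully. The appearance of $\log^{-p^+-\delta}(1/t)$ in the gauge will correspond to summability of the tail series $\sum_k k^{-(p^+ +\delta)/p(x)}$ uniformly over $x\in E$, with the margin $\delta>0$ ensuring this sum converges.

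The first step is to reduce to a Riesz-potential statement. Given $\epsilon>0$, choose $u_\epsilon\in S_{p(\cdot)}(E)$ with modular $\rho_{1,p(\cdot)}(u_\epsilon):=\int(|u_\epsilon|^{p(x)}+|\nabla u_\epsilon|^{p(x)})\,dx<\epsilon$ (after a harmless truncation, $u_\epsilon$ may be taken compactly supported). Setting $f_\epsilon=|u_\epsilon|+|\nabla u_\epsilon|$, the classical Riesz representation $u_\epsilon(x)\lesssim I_1 f_\epsilon(x)$ gives $I_1 f_\epsilon(x)\gtrsim 1$ quasi-everywhere on $E$, which is enough for our Hausdorff conclusion.

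The heart of the argument is a Hedberg-type extraction of a critical scale $r_x>0$ for each $x\in E$, at which
\begin{equation*}
\int_{B(x,r_x)}f_\epsilon(y)^{p(y)}\,dy\geq c\,h(x,r_x),
\end{equation*}
with $c>0$ depending only on $n,p^-,p^+,\delta$. Arguing by contradiction, suppose $\int_{B(x,r)}f_\epsilon^{p(y)}\,dy<c\,h(x,r)$ for every $r\leq r_0$. Dyadically decompose $I_1 f_\epsilon(x)$ over annuli $A_k(x)=\{y:2^{-k-1}<|x-y|\leq 2^{-k}\}$ and apply variable-exponent H\"older on each annulus, using log-H\"older continuity to freeze $p(y)\approx p(x)$ (the perturbation $|p(y)-p(x)|\lesssim 1/k$ is absorbed into multiplicative constants). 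The tail $|x-y|\geq r_0$ is estimated by $\|f_\epsilon\|_{L^{p(\cdot)}}\cdot C(r_0)$ via log-H\"older decay. The result is
\begin{equation*}
I_1 f_\epsilon(x)\lesssim c^{1/p(x)}\sum_{k\geq k_0} k^{-(p^+ +\delta)/p(x)}+C(r_0)\,\|f_\epsilon\|_{L^{p(\cdot)}(\mathbb{R}^n)}.
\end{equation*}
Since $(p^+ +\delta)/p(x)>1$ uniformly in $x\in E$, the series converges to a constant depending only on $p^-,p^+,\delta$. Choosing $c$ small enough and then $\epsilon$ small enough contradicts $I_1 f_\epsilon(x)\gtrsim 1$.

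Finally, apply Vitali's $5r$-covering lemma to the family $\{B(x,r_x)\}_{x\in E}$ (radii capped at a fixed small scale so the family is bounded): extract a disjoint sub-family $\{B(x_i,r_{x_i})\}$ covering $E$ at scale $5r_{x_i}$. Using the doubling $h(x,5r)\lesssim h(x,r)$ for small $r$ together with disjointness,
\begin{equation*}
\sum_i h(x_i,5r_{x_i})\lesssim\sum_i h(x_i,r_{x_i})\leq c^{-1}\sum_i\int_{B(x_i,r_{x_i})}f_\epsilon^{p(y)}\,dy\leq c^{-1}\epsilon,
\end{equation*}
and sending $\epsilon\to 0$ yields $\mathcal{H}^{h(\cdot)}(E)=0$. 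The main obstacle is the Hedberg step: executing the variable-exponent H\"older decomposition with uniformly controlled log-H\"older constants, and verifying that the convergent tail sum $\sum k^{-(p^+ +\delta)/p(x)}$ admits a bound uniform in $x\in E$. The supremum exponent $p^+$ in the gauge (rather than a pointwise $p(x)$) is dictated by this uniformity requirement, and the margin $\delta>0$ is precisely the slack needed for the series to converge at every $x$.
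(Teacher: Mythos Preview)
Your route is correct but genuinely different from the paper's proof of Theorem~\ref{main theorem 2}; it is closer in spirit to the paper's proof of the quantitative Theorem~\ref{main theorem 1}. The paper constructs a single barrier $u=\sum_i u_i\in W^{1,p(\cdot)}(\mathbb{R}^n)$ with $\|u\|_{1,p(\cdot)}\le 1$ and $\dashint_{B(x,r)}u\,dy\to\infty$ for every $x\in E$; a telescoping Poincar\'e estimate over dyadic balls (the same analytic core as your Hedberg step, producing the convergent series $\sum_k k^{-(p^+ +\delta)/p^+}$) then shows that $E$ lies in the set where $h(x,r)^{-1}\int_{B(x,r)}|\nabla u|^{p(y)}\,dy$ blows up, after which Lemma~\ref{main lemma} supplies the Vitali covering to conclude that this set has $\mathcal{H}^{h(\cdot)}$-measure zero. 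You instead keep a near-optimal test function $u_\epsilon$ for each $\epsilon$, extract a critical scale $r_x$ directly, apply Vitali, and send $\epsilon\to 0$; this is more quantitative and closer to the classical Adams--Hedberg template. The barrier packaging buys the paper two simplifications that you skim over: first, the pointwise bound $u_\epsilon(x)\lesssim I_1 f_\epsilon(x)$ holds only a.e.\ for generic Sobolev $u_\epsilon$, so you should either take smooth $u_\epsilon$ (available by density under log-H\"older) or, cleaner, replace $I_1$ by the chain argument starting from $\dashint_{B(x,r)}u_\epsilon\ge 1$, which holds at every $x\in E$ with no exceptional set; second, your final display only bounds the pre-measure $\mathcal{H}^{h(\cdot)}_{5r_0}(E)$, so you must let the cap $r_0\to 0$ and choose $\epsilon$ afterward---Lemma~\ref{main lemma} handles this order-of-limits issue by allowing covering radii $\le\eta$ from the outset and then taking $\lambda\to\infty$ against the fixed barrier $u$.
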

\noindent We also get a lower bound of the $p(\cdot)$-capacity of a compact set $E$ inside a ball in terms of generalized Hausdorff $h(\cdot)$-measure, using the techniques of \cite{KK15revista}.
\begin{theorem}\label{main theorem 1}
 Let $x_0\in \mathbb{R}^n, 0<R<\infty$ and $E\subset B(x_0,R)$ be any compact set. Suppose that $p:\mathbb{R}^n\rightarrow [1,n]$ satisfies both the log-H\"older continuity and the log-H\"older decay condition. Then there exists a constant $c>0$ depending only on $p,$ $n$ and $R$ such that $\mathcal H^{h(\cdot)}(E) \leq c C_{p(\cdot)}(E),$ where $h(\cdot,t)=t^{n-p(\cdot)}(\log(\frac{1}{t}))^{{-p^+}-\delta}$ for any $\delta >0.$   
\end{theorem}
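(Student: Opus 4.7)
The plan is to adapt the potential-theoretic approach of Karak and Koskela \cite{KK15revista} to the variable-exponent setting. Fix $\varepsilon>0$ and choose $u\in S_{p(\cdot)}(E)$ with
$$\int_{\mathbb{R}^n}\bigl(|u|^{p(x)}+|\nabla u|^{p(x)}\bigr)\,dx\le C_{p(\cdot)}(E)+\varepsilon.$$
Since $E\subset B(x_0,R)$ is compact and $u\ge 1$ on an open neighborhood of $E$, after choosing a quasicontinuous representative I may assume $u(x)\ge 1$ for every $x\in E$.

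The first step is a telescoping argument. For each $x\in E$ and $0<r\le 1$ write
$$1\le |u_{B(x,r)}|+\sum_{k=0}^{\infty}\bigl|u_{B(x,2^{-k-1}r)}-u_{B(x,2^{-k}r)}\bigr|,$$
and bound each difference by the variable-exponent $(1,p(\cdot))$-Poincar\'e inequality,
$$\bigl|u_{B(x,2^{-k-1}r)}-u_{B(x,2^{-k}r)}\bigr|\le c\,(2^{-k}r)\,\dashint_{B(x,2^{-k}r)}|\nabla u(y)|\,dy.$$
Under the log-H\"older continuity and decay assumptions the results of \cite{DHHR11} give a variable-exponent Sobolev-Poincar\'e inequality whose constant depends on $r$ only through a fixed power of $\log(1/r)$; this allows one to replace the $L^{1}$ average of $|\nabla u|$ by the localized $p(\cdot)$-modular at the expense of a logarithmic correction whose exponent can be quantified in terms of $p^+$.

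The second step is a selection of scales. For each $x\in E$, an argument of Maz'ya type shows that if
$$h(x,r)>c(p,n,R)\int_{B(x,r)}\bigl(|u(y)|^{p(y)}+|\nabla u(y)|^{p(y)}\bigr)\,dy$$
held for every $r\in(0,1/2)$, then the telescoping sum above would yield a value strictly less than $1$, contradicting $u(x)\ge 1$. Thus there exists $r_x\in(0,1/2)$ for which the opposite inequality is valid. The precise power $-p^{+}-\delta$ in $h(\cdot,t)$ is dictated by the requirement that the series $\sum_{k}k^{-1-\delta/p^{+}}$ obtained from the telescoping combined with the log-correction be summable.

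The third step is a covering argument. The family $\{B(x,5r_x):x\in E\}$ covers $E$, so by the $5r$-covering lemma I extract a countable disjoint subfamily $\{B(x_i,r_i)\}$ with $E\subset\bigcup_i B(x_i,5r_i)$ and $5r_i<\diam E$ for every $i$. Since $h(\cdot,t)$ is essentially monotone in $t$ and essentially independent of small translations of its first argument (again by log-H\"older continuity of $p$), the definition of $\mathcal{H}^{h(\cdot)}$ yields
$$\mathcal H^{h(\cdot)}(E)\le c\sum_i h(x_i,5r_i)\le c'\sum_i\int_{B(x_i,r_i)}\bigl(|u|^{p(y)}+|\nabla u|^{p(y)}\bigr)\,dy\le c'\bigl(C_{p(\cdot)}(E)+\varepsilon\bigr),$$
the penultimate inequality using the disjointness of the balls $B(x_i,r_i)$. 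Letting $\varepsilon\to 0$ finishes the proof.

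The main obstacle is the second step: one must carefully transfer the fixed exponent $p(x)$ at the center of the ball to the varying exponent $p(y)$ appearing in the integrand, and simultaneously aggregate the logarithmic constants from the variable-exponent Poincar\'e inequality, so that they combine into exactly the factor $\log^{-p^+-\delta}(1/t)$ in the gauge function. Once this bookkeeping is carried out, the covering and summation argument is routine.
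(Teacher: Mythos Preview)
Your outline follows essentially the paper's approach---telescoping estimate, scale selection by contradiction, then $5B$-covering---and you correctly isolate both the summability constraint $\sum_k k^{-1-\delta/p^+}<\infty$ that forces the exponent $-p^+-\delta$ and the modular-to-norm passage as the key technical point. Two differences are worth flagging. First, the paper multiplies the admissible function by a cutoff supported in a slightly larger ball and runs a chain of balls \emph{outward} from $x$ to a ball $B_1$ lying outside the support, so that $u_{B_1}=0$ and only gradient terms appear in the telescope; your inward telescope retains the zeroth-order term $|u_{B(x,r)}|$, which you propose to absorb via the $|u|^{p(y)}$ part of the modular. That works, but it is an extra estimate the paper sidesteps. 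Second, the logarithmic correction does not arise from a black-box variable-exponent Sobolev--Poincar\'e inequality in \cite{DHHR11}: the paper uses only the classical $L^1$ Poincar\'e inequality plus H\"older, and the entire passage from the modular assumption $\int_{B(x,\gamma_l)}|\nabla u|^{p(y)}\,dy\le K_1^{p^+}\gamma_l^{n-p(x)}\log^{-p^+-\delta}(K_2/\gamma_l)$ to the norm bound $\|\nabla u\|_{L^{p(\cdot)}(B(x,\gamma_l))}\le c\,\gamma_l^{(n-p(x))/p(x)}\log^{(-p^+-\delta)/p^+}(K_2/\gamma_l)$ is done by hand, splitting the ball into $\{y:p(y)\ge p(x)\}$ (where log-H\"older continuity yields $\gamma_l^{-p(y)}\le c\,\gamma_l^{-p(x)}$) and its complement. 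This is precisely the ``bookkeeping'' you acknowledge in your final paragraph, so your plan is sound, but attributing it to a Sobolev--Poincar\'e inequality mislocates the mechanism.
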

\vspace{0.3cm}
We also consider here the relation between the Musielak-Orlicz-Sobolev capacity, which was studied earlier in \cite{BHH18}, and the Hausdorff measure. Let us recall the definition of the capacity here.
\begin{definition}
    The Musielak-Orlicz-Sobolev  $\Phi(\cdot, \cdot)$-capacity of $E\subset \mathbb{R}^n$ is defined by 
    \begin{equation}
        C_{\Phi(\cdot, \cdot)}(E):= \inf_{u\in S_{\Phi(\cdot, \cdot)}(E)}\int_{\mathbb{R}^n} \Phi(x, u)+ \Phi(x, \triangledown u)  dx
    \end{equation}
    where $S_{\Phi(\cdot, \cdot)}(E):= \{u\in W^{1,\Phi(\cdot, \cdot)}(\mathbb{R}^n): u\geq 1  $ in an open set containing $E$ and $u\geq 0\}.$
\end{definition}
\noindent Here we consider the class of functions $\Phi(x,t)=t^{p(x)}\log^{q(x)}(e+t)$ as generalized weak $\Phi$-functions, where $1\leq p(x)\leq n$ and $-\infty<q(x)<\infty$ for all $x\in\mathbb{R}^n.$ Again we refer to Section 2 for the definition of weak $\Phi$-functions. We have the following existing result for a general class of functions.
\begin{theorem}\cite[Corollary 14]{BHH18}
    Let $\Phi\in\Phi_{w}(\mathbb{R}^n)$ satisfy $(A0),$ $(aInc)_p$ with $p>1$ and $(aDec).$ If $p\leq n$ and $E\subset\mathbb{R}^n$ with $C_{\Phi(\cdot, \cdot)}(E)=0,$ then $H^s(E)=0$ for all $s>n-p.$
\end{theorem}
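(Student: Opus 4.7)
The strategy is to show that under the given hypotheses, $C_{\Phi(\cdot,\cdot)}(E)=0$ forces the classical Sobolev $p$-capacity of $E$ to vanish as well; the classical theorem of Kilpel\"ainen--Mal\'y (see also Adams--Hedberg \cite{AH96}) then gives $H^s(E)=0$ for every $s>n-p$. By countable subadditivity of $H^s$ together with monotonicity of $C_{\Phi(\cdot,\cdot)}$, we reduce at once to the case where $E$ is compact and contained in a fixed ball $B=B(x_0,R)$.

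Given $\varepsilon>0$, choose $u\in S_{\Phi(\cdot,\cdot)}(E)$ with $\int_{\mathbb R^n}\Phi(x,u)+\Phi(x,|\nabla u|)\,dx<\varepsilon$. Let $\eta\in C_c^\infty(2B)$ be a cutoff with $\eta\equiv 1$ on $B$ and $|\nabla\eta|\le C/R$, and set $v:=\eta\,\min(u,1)$. Then $v$ has compact support in $2B$, is nonnegative, and equals $1$ on an open neighborhood of $E$; once $v\in W^{1,p}_0(2B)$ is verified, it is admissible for the classical relative $p$-capacity $\operatorname{cap}_p(E,2B)$.

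The key quantitative step is to bound the classical $W^{1,p}$-energy of $v$ by a positive power of $\varepsilon$. For a nonnegative measurable function $w$ on $2B$, we split $\int_{2B}w^p\,dx$ along $\{w\ge 1\}$ vs.\ $\{w<1\}$. On $\{w\ge 1\}$, combining $(A0)$ with $(aInc)_p$ gives the pointwise bound $w^p\le C\,\Phi(x,w)$, so this part is absorbed into $\varepsilon$. On $\{w<1\}$, $(aDec)_q$ for some $q\ge p$ yields $\Phi(x,w)\ge c\,w^q$, and H\"older's inequality on the bounded set $2B$ yields
\begin{equation*}
\int_{2B\cap\{w<1\}} w^p\,dx\;\le\;|2B|^{(q-p)/q}\bigg(\int_{2B\cap\{w<1\}} w^q\,dx\bigg)^{p/q}\;\le\;C_{R,\Phi}\,\varepsilon^{p/q}.
\end{equation*}
Applying this with $w=u$ and $w=|\nabla u|$, and using the product rule $\nabla v=\eta\,\nabla\min(u,1)+\min(u,1)\,\nabla\eta$ together with $|\nabla\min(u,1)|\le|\nabla u|$, one concludes that $\operatorname{cap}_p(E,2B)\le C_{R,\Phi}\,\varepsilon^{p/q}$. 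Letting $\varepsilon\to 0$ gives $\operatorname{cap}_p(E,2B)=0$, and the classical capacity--Hausdorff measure comparison completes the proof.

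The principal technical obstacle is the cutoff-induced term $\int_{2B\setminus B}|\min(u,1)\nabla\eta|^p\,dx\le C_R\int_{2B}\min(u,1)^p\,dx$, which is not automatically small because $\min(u,1)$ need not be small pointwise. The H\"older bound above, applied with $w=\min(u,1)\le u$, is exactly what is needed to convert the $\Phi$-smallness of $u$ into $L^p$-smallness of $\min(u,1)$ on $2B$, and so makes this cutoff contribution of order $\varepsilon^{p/q}$. A secondary subtlety is that the dichotomy $(aInc)_p$/$(aDec)_q$ only gives $\Phi(x,t)\sim t^p$ near and above $t=1$ while $\Phi(x,t)\sim t^q$ for small $t$, so the power $\varepsilon^{p/q}$ (rather than $\varepsilon$) is essentially the best one can hope for here; fortunately, any positive power suffices for the qualitative conclusion $\operatorname{cap}_p(E,2B)=0$.
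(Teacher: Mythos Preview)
The paper does not prove this statement; it is quoted verbatim as Corollary~14 of \cite{BHH18} and serves only as background motivating Theorems~\ref{main theorem 3} and~\ref{main thorem 4}. There is therefore no ``paper's own proof'' against which to compare your attempt.

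That said, your argument is essentially correct and is the natural route: reduce $C_{\Phi(\cdot,\cdot)}(E)=0$ to the vanishing of the constant-exponent relative $p$-capacity on a ball, then invoke the classical capacity--Hausdorff comparison. The splitting $\{w\ge 1\}\cup\{w<1\}$ together with $(A0)$, $(aInc)_p$ and $(aDec)_q$ gives exactly $w^p\le C\,\Phi(x,w)$ on the first set and $w^q\le C\,\Phi(x,w)$ on the second, and the H\"older step on the bounded set $2B$ correctly converts $\Phi$-smallness into $L^p$-smallness of order $\varepsilon^{p/q}$. Your treatment of the cutoff term $\int_{2B}\min(u,1)^p|\nabla\eta|^p$ is also right: on $\{u\ge 1\}$ the integrand is $|\nabla\eta|^p$ and one uses $1\le C\,\Phi(x,1)\le C\,\Phi(x,u)$, while on $\{u<1\}$ the H\"older bound applies. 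One minor quibble: the initial reduction needs only boundedness of $E$, not compactness---you never use compactness thereafter, and writing $E=\bigcup_m\bigl(E\cap B(0,m)\bigr)$ with countable subadditivity of $H^s$ and monotonicity of $C_{\Phi(\cdot,\cdot)}$ already suffices.
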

\noindent Note that the function $\Phi(x,t)=t^{p(x)}\log^{q(x)}(e+t)$ satisfies the conditions $(A0),$ $(aDec)$ (see \cite{PK24} for the proofs) and also $(aInc)_{p^-}$ when $q(x)\geq 0.$ Therefore, when $q(x)\geq 0,$ we have from the previous theorem that $H^s(E)=0$ for all $s>n-p^-$ and all sets $E\subset\mathbb{R}^n $ with $C_{\Phi(\cdot, \cdot)}(E)=0.$ We improve this result here with some additional assumptions on $p.$
\begin{theorem}\label{main theorem 3}
   Suppose that $p:\mathbb{R}^n\rightarrow [1,n]$ satisfies both the log-H\"older continuity and the log-H\"older decay condition and $q:\mathbb{R}^n\rightarrow (0,\infty)$ is any function. Let $h:\mathbb{R}^n\times (0, \infty)  \rightarrow (0, \infty) $ be given by $h(\cdot, t)=t^{n-p(\cdot)}\log^{-p^+}\left(\frac{1}{t}\right)$ and let $\Phi(x, t) = t^{p(x)}\log^{q(x)}(e+t)$. If $C_{\Phi(\cdot, \cdot)}(E)=0$ for any $E \subset \mathbb{R}^n$, then $\mathcal H^{h(\cdot)}(E)=0$.
\end{theorem}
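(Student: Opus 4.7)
The first step is to observe that the monotonicity $\Phi(x,t)\geq t^{p(x)}$ holds pointwise, since $q(x)>0$ and $\log(e+t)\geq 1$ for $t\geq 0$. This yields $W^{1,\Phi(\cdot,\cdot)}(\mathbb{R}^n)\subset W^{1,p(\cdot)}(\mathbb{R}^n)$, $S_{\Phi(\cdot,\cdot)}(E)\subset S_{p(\cdot)}(E)$, and the modular comparison $\int_{\mathbb{R}^n}|u|^{p(x)}+|\nabla u|^{p(x)}\,dx\leq \int_{\mathbb{R}^n}\Phi(x,u)+\Phi(x,|\nabla u|)\,dx$ for every admissible $u$; taking the infimum gives $C_{p(\cdot)}(E)\leq C_{\Phi(\cdot,\cdot)}(E)=0$. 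Applying Theorem \ref{main theorem 2} already yields $\mathcal H^{h_\delta}(E)=0$ for every $\delta>0$, where $h_\delta(\cdot,t)=t^{n-p(\cdot)}\log^{-p^+-\delta}(1/t)$.

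This alone is not enough, because for small $t$ the gauge $h_\delta$ is strictly smaller than $h(\cdot,t)=t^{n-p(\cdot)}\log^{-p^+}(1/t)$, so one only gets $\mathcal H^{h_\delta}(E)\leq \mathcal H^{h}(E)$ and the implication $\mathcal H^{h_\delta}(E)=0\Rightarrow \mathcal H^{h}(E)=0$ fails in general. To close the gap, I would re-enter the proof of Theorem \ref{main theorem 2} and replace the $p(\cdot)$-modular by the $\Phi(\cdot,\cdot)$-modular at every step. Starting from an admissible $u\in S_{\Phi(\cdot,\cdot)}(E)$ of arbitrarily small $\Phi$-modular, the dyadic covering step produces balls $\{B(x_i,r_i)\}$ covering $E$ together with an estimate of the shape
\[\sum_i r_i^{n-p(x_i)}\log^{-p^+-\delta}(1/r_i)\;\lesssim\; \int_{\mathbb{R}^n}|u|^{p(x)}+|\nabla u|^{p(x)}\,dx.\]
With the $\Phi$-modular on the right, the integrand acquires an additional factor $\log^{q(x)}(e+|\nabla u|)$; on the dyadic level $|\nabla u|\sim r_i^{-1}$ this contributes $\log^{q(x)}(1/r_i)$, and absorbing it into the dyadic summation compensates for the missing $\delta$ cushion, so the gauge on the left upgrades from $\log^{-p^+-\delta}$ to $\log^{-p^+}$.

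The main obstacle will be that $q$ is only assumed positive, so $q^{-}:=\inf q$ may vanish and the surplus log factor is not uniform on $\mathbb{R}^n$. I would handle this by the standard exhaustion $E=\bigcup_{m\in\mathbb{N}} E_m$ with $E_m=\{x\in E: q(x)\geq 1/m\}$: on each $E_m$ one has $\inf_{E_m}q\geq 1/m$, which suffices to run the argument above with a uniform surplus, and one then concludes by countable subadditivity of $\mathcal H^{h(\cdot)}$. The log-Hölder continuity and log-Hölder decay of $p$ are needed, exactly as in Theorem \ref{main theorem 2}, to commute the variable exponent $p(x)$ past the dyadic scale $r_i$ and to freeze $p(x)$ on each ball $B(x_i,r_i)$; these ingredients are routine modifications of the corresponding steps in the proof of Theorem \ref{main theorem 2} and of \cite{KK15revista}.
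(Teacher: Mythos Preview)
Your reduction $C_{p(\cdot)}(E)\leq C_{\Phi(\cdot,\cdot)}(E)=0$ is correct, and you rightly see that Theorem~\ref{main theorem 2} alone only reaches the gauge $h_\delta$ with the extra $\log^{-\delta}$ cushion. The genuine gap is the heuristic ``on the dyadic level $|\nabla u|\sim r_i^{-1}$'': nothing forces $|\nabla u|$ to be of this order on $B_i$, and where the gradient is small the factor $\log^{q(y)}(e+|\nabla u|)$ contributes nothing. The paper resolves this by a level-set split $B_i=B_i^1\cup B_i^2$ with $B_i^1=\{y\in B_i:|\nabla u(y)|\leq (2^{-i}R)^{-1/2}\}$ and $B_i^2$ its complement in $B_i$. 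On $B_i^1$ the crude bound $\int_{B_i^1}|\nabla u|\leq (2^{-i}R)^{n-1/2}$ gives, after multiplying by $(2^{-i}R)^{1-n}$, a term $\lesssim 2^{-i/2}$, summable outright. On $B_i^2$ one has $\log(e+|\nabla u|)\gtrsim i$, so after passing to the weighted function $|\nabla u|\,(\log(e+|\nabla u|))^{q(\cdot)/p^+}$ (whose $L^{p(\cdot)}$ norm is $\leq 1$ by Lemma~\ref{main lemma 3}) one extracts an extra factor $i^{-q^-/p^+}$ in the telescoping sum. This factor is precisely what upgrades the gauge from $\log^{-p^+-\delta}$ to $\log^{-p^+}$.

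Your exhaustion $E=\bigcup_m\{x\in E:q(x)\geq 1/m\}$ does not salvage the argument either: the log factor one extracts on $B_i^2$ depends on $q(y)$ for $y$ in the ball around $x$, not on $q(x)$, and $x\in E_m$ gives no lower bound on $q$ in a neighborhood of $x$ without further regularity of $q$. The paper simply inserts the global infimum $q^-$ at this step.
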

\noindent We also get a similar result, when $q(x)<0$ for all $x\in\mathbb{R}^n.$
\begin{theorem}\label{main thorem 4}
   Suppose that $p:\mathbb{R}^n\rightarrow [1,n]$ satisfies both the log-H\"older continuity and the log-H\"older decay condition and $q:\mathbb{R}^n\rightarrow (-\infty, 0)$ is any function. Let $h:\mathbb{R}^n\times (0, \infty)  \rightarrow (0, \infty) $ be given by $h(\cdot, t)=t^{n-p(\cdot)}\log^{-p^++\frac{q^+ p^+}{p^-}-\delta}\left(\frac{1}{t}\right),$ for any $\delta >0$, and let $\Phi(x, t) = t^{p(x)}\log^{q(x)}(e+t)$. If $C_{\Phi(\cdot, \cdot)}(E)=0$ for any $E \subset \mathbb{R}^n$, then $\mathcal H^{h(\cdot)}(E)=0$.
\end{theorem}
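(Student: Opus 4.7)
The proof will follow the blueprint of Theorem \ref{main theorem 2}, adjusted to accommodate the extra $\log^{q(x)}$ factor appearing in $\Phi$. Fix $\epsilon > 0$. Since $C_{\Phi(\cdot,\cdot)}(E)=0$, we can choose an admissible $u \in S_{\Phi(\cdot,\cdot)}(E)$ with $\int_{\mathbb{R}^n}\Phi(x,u)+\Phi(x,\nabla u)\,dx<\epsilon$. The standard truncation $u \mapsto \min(u,1)$ does not increase the modular, since $t \mapsto \Phi(x,t)$ is non-decreasing, so we may assume $0 \le u \le 1$.

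Mirroring the strategy of Theorem \ref{main theorem 2}, cover $E$ by balls arising from an energy/capacitary condition on $u$ and extract a disjoint sub-family $\{B_i\} = \{B(x_i,r_i)\}$ through a Besicovitch/Vitali-type covering lemma so that $\{5 B_i\}$ still covers $E$. The central task is then to prove a local estimate of the form
\[
r_i^{n-p(x_i)} \log^{-p^+ + q^+ p^+/p^- - \delta}(1/r_i) \le C \int_{5 B_i} \Phi(x,u) + \Phi(x,\nabla u) \, dx.
\]
Summing over the disjoint family then yields $\mathcal{H}^{h(\cdot)}_\infty(E) \le C \epsilon$, and letting $\epsilon \to 0$ gives $\mathcal{H}^{h(\cdot)}(E) = 0$.

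The principal obstacle is the derivation of this local estimate. Starting from the $p(\cdot)$-analogue that drives Theorem \ref{main theorem 2} (where the right-hand side is $\int_{5B_i} |u|^{p(x)} + |\nabla u|^{p(x)}\, dx$), we exploit the pointwise identity
\[
|\nabla u|^{p(x)} = \Phi(x,\nabla u) \cdot \log^{-q(x)}(e+|\nabla u|).
\]
Since $q(x) < 0$, the factor $\log^{-q(x)}(e+|\nabla u|)$ is at least $1$ and grows with $|\nabla u|$, so a bound on $\int \Phi$ does not immediately transfer to a bound on $\int |\nabla u|^{p(x)}$. The remedy is to split the integration around the natural scale $|\nabla u|\approx 1/r_i$ appearing in the capacity estimate and to apply H\"older's inequality with suitable exponents involving $p^+$ and $p^-$, interpolating between the two extremes of $p(\cdot)$ against the $\Phi$-modular. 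This interpolation is what produces the coefficient $q^+ p^+/p^-$ in the exponent of the logarithm, while the $-\delta$ slack absorbs residual losses coming from the log-H\"older continuity of $p(\cdot)$ and from H\"older's inequality itself.

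The $|u|^{p(x)}$ piece is handled analogously, and summing the local estimates over the disjoint covering family completes the argument.
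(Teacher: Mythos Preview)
Your outline is not a proof, and it does not in fact mirror Theorem~\ref{main theorem 2}. That theorem (and the paper's proof of the present one) does \emph{not} proceed by choosing a single test function with small modular and then proving a local estimate on covering balls; rather, one sums a sequence of admissible $u_i$ with $\|u_i\|_{1,\Phi}\le 2^{-i}$ to obtain a fixed $u\in W^{1,\Phi(\cdot,\cdot)}$ with $\|u\|\le 1$ and $\limsup_{r\to 0}\dashint_{B(x,r)}u=\infty$ for every $x\in E$, and then shows via a telescoping Poincar\'e argument that this limsup is \emph{finite} whenever a certain weighted modular of $\nabla u$ on $B(x,r)$ stays bounded as $r\to 0$. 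Lemma~\ref{main lemma} then finishes the job. The direct covering scheme you sketch is closer to Theorem~\ref{main theorem 1}, which is a separate quantitative statement for compact sets.

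The substantive gap is your local estimate
\[
r_i^{\,n-p(x_i)}\log^{-p^{+}+q^{+}p^{+}/p^{-}-\delta}(1/r_i)\ \le\ C\int_{5B_i}\Phi(x,u)+\Phi(x,\nabla u)\,dx,
\]
which you assert but do not derive; ``split around $|\nabla u|\approx 1/r_i$ and apply H\"older with exponents involving $p^+,p^-$'' neither explains how the admissibility $u\ge 1$ enters nor why the particular exponent $q^{+}p^{+}/p^{-}$ emerges. In the paper, that exponent is produced by two concrete steps: first Lemma~\ref{main lemma 4} converts $\|\nabla u\|_{L^{\Phi}}\le 1$ into $\||\nabla u|(\log(e+|\nabla u|))^{q(\cdot)/p^{-}}\|_{L^{p(\cdot)}}\le 1$ (the $p^{-}$ appears because $q(y)p(y)/p^{-}\ge q(y)$ when $q(y)<0$); then, in the telescoping bound on $|u_{B_{i+1}}-u_{B_i}|$, Jensen's inequality for $\Psi(t)=t\log^{q^{+}/p^{-}}(e+t)$ together with Poincar\'e and H\"older reduces matters to the $L^{p(\cdot)}$-norm of $|\nabla u|\log^{q(\cdot)/p^{-}}(e+|\nabla u|)$, and the final passage from norm to modular replaces $p(y)$ by $p^{+}$, yielding $p^{+}-q^{+}p^{+}/p^{-}+\delta$ in the log exponent of the exceptional set. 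Your sketch contains none of this machinery.
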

Throughout the paper, we denote by $c$ a positive constant which is independent of the main parameters, but which may vary from line to line. The symbol $A\lesssim B$ means that $A\leq cB$ for some constant $c>0.$ If $A\lesssim B$ and $B\lesssim A,$ then we write $A\approx B.$
\section{Notations and Preliminaries }
Let us first recall the definition of variable exponents and their related standard notations.
\begin{definition}
    Let $\Omega\subset \mathbb{R}^n$ and $(\Omega, \Sigma, \mu)$ be a $\sigma$-finite, complete measure space. We define $\mathcal{P}(\Omega, \mu) $ to be the set of all $\mu$-measurable functions $p:\Omega\rightarrow [1,\infty).$ Functions $p\in \mathcal{P}(\Omega, \mu)$ are called variable exponents on $\Omega$.\\
    In the special case that $\mu$ is the $n$-dimensional Lebesgue measure and $\Omega$ is an open subset of $\mathbb{R}^n$, we abbreviate $\mathcal{P}(\Omega):= \mathcal{P}(\Omega, \mu). $
\end{definition}
\begin{definition}
Let\: $\Omega\subset \mathbb{R}^n.$ A variable exponent is a bounded measurable function $p:\Omega\rightarrow [1,\infty),$ usually denoted by $p(\cdot).$\\
For such $p(\cdot),$ where $A$ is a measurable subset of $\Omega$, $$p_A^+:=\esssup\{p(x):x\in A\} \;\;\text{and}\;\; p_A^-:=\essinf\{p(x):x\in A\}.$$
Also we denote $$p^+:=\esssup_{x\in \mathbb{R}^n}p(x)\;\;\text{and}\;\;p^-:=\essinf_{x\in \mathbb{R}^n}p(x).$$
\end{definition}
\noindent Throughout the paper, we assume that $p$ is bounded in $\mathbb{R}^n$, that is $1\leq p^-\leq p^+<\infty.$\\

For the study of variable exponent Lebesgue spaces and Sobolev spaces, we refer the reader to the books \cite{CF13, DHHR11, PKJF13}. We will need the following H\"older inequality on variable exponent Lebesgue spaces; the proof can be found in any of the above references.
\begin{lemma}
 If $r(\cdot)\,,\, p(\cdot)\,,\,q(\cdot) $ are variable exponents satisfying $1/r(x)=1/p(x)+1/q(x)$ almost everywhere on $\Omega$ then \begin{equation}   \label{holder inequality}     
    \Vert uv\Vert_{L^{r(\cdot)}(\Omega)}\leq 2 \Vert u\Vert_{L^{p(\cdot)}(\Omega)}\,\Vert v\Vert_{L^{q(\cdot)}(\Omega)}.\end{equation}
\end{lemma}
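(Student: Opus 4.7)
The argument will parallel the scheme used to prove Theorem~\ref{main theorem 3}. Fix $\varepsilon>0$ and, using $C_{\Phi(\cdot,\cdot)}(E)=0$, choose $u\in S_{\Phi(\cdot,\cdot)}(E)$ with
\begin{equation*}
    \int_{\mathbb{R}^n}\bigl(\Phi(x,u)+\Phi(x,|\nabla u|)\bigr)\,dx<\varepsilon.
\end{equation*}
The goal is to produce, from this single $u$, a countable cover of $E$ by balls $B(x_i,r_i)$ with $\sum_i h(x_i,r_i)\le C\varepsilon$; letting $\varepsilon\to 0$ then yields $\mathcal{H}^{h(\cdot)}(E)=0$. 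The cover will be extracted by a Vitali/$5r$-selection from balls on which a Sobolev--Poincaré type estimate, driven by the log-Hölder continuity and log-Hölder decay of $p$, gives a lower bound of the form
\begin{equation*}
    h(x_i,r_i)\;\lesssim\;\int_{B(x_i,cr_i)}\bigl(\Phi(x,u)+\Phi(x,|\nabla u|)\bigr)\,dx.
\end{equation*}
Once this ball-by-ball inequality is in place, summation and the bounded overlap of the selected balls complete the proof exactly as in the variable-exponent case.

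The substantive new difficulty is that for $q(x)<0$ one has $\log^{q(x)}(e+t)\le 1$, hence $\Phi(x,t)\le t^{p(x)}$. Consequently $C_{\Phi(\cdot,\cdot)}\le C_{p(\cdot)}$ and one \emph{cannot} reduce the statement to Theorem~\ref{main theorem 2} by simply dominating the standard variable-exponent energy by the $\Phi$-energy; the reduction used in Theorem~\ref{main theorem 3} (where $\Phi\ge t^{p(x)}$) runs in the wrong direction here. This is precisely why the gauge in Theorem~\ref{main thorem 4} must be weaker than the one in Theorem~\ref{main theorem 3}, and why an additional logarithmic factor $\log^{q^+p^+/p^-}(1/t)$ appears.

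The new ingredient is a localized inequality relating $\int|\nabla u|^{p(x)}$ to the $\Phi$-energy. Using the pointwise identity $t^{p(x)}=\Phi(x,t)\,\log^{-q(x)}(e+t)$ with $-q(x)>0$, I would apply the Hölder inequality \eqref{holder inequality} in the variable-exponent Lebesgue space on a ball $B(x_0,r)$ with exponents $p^+/p^-$ and its conjugate, factoring
\begin{equation*}
    \int_{B(x_0,r)}|\nabla u|^{p(x)}\,dx
    \;=\;\int_{B(x_0,r)}\Phi(x,|\nabla u|)\cdot\log^{-q(x)}(e+|\nabla u|)\,dx,
\end{equation*}
and decomposing the integral according to dyadic level sets of $|\nabla u|$ to interpolate between the extreme cases $|\nabla u|\le 1/r$ and $|\nabla u|$ large. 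The log-Hölder hypothesis on $p$ converts the pointwise factor $\log^{-q(x)}(e+|\nabla u|)$ into $\log^{-q^+p^+/p^-}(1/r)$ after passing between modular and norm in $L^{p(\cdot)}$; this modular/norm conversion is also where the small loss $\delta$ in the exponent of $h$ is absorbed. The analogous estimate for $u$ itself is obtained in the same way.

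Substituting this bound into the Sobolev--Poincaré step of the Theorem~\ref{main theorem 3} covering argument gives the ball-by-ball lower bound on $h(x_i,r_i)$ stated above. The hard part will be the Hölder computation in the third step: the exponents $p^+/p^-$, the $q$-correction, and the log-Hölder losses must be tracked simultaneously so that the final logarithmic exponent lands exactly at $-p^++q^+p^+/p^--\delta$ rather than something strictly worse. Everything else (admissibility of $u$, the covering, and the final summation) is a faithful adaptation of the corresponding steps in the proof of Theorem~\ref{main theorem 3}.
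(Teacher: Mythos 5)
Your proposal does not prove the statement at hand. The statement is the H\"older inequality in variable exponent Lebesgue spaces: if $1/r(x)=1/p(x)+1/q(x)$ a.e.\ then $\Vert uv\Vert_{L^{r(\cdot)}(\Omega)}\leq 2\Vert u\Vert_{L^{p(\cdot)}(\Omega)}\Vert v\Vert_{L^{q(\cdot)}(\Omega)}$. What you have written is instead a plan for Theorem~\ref{main thorem 4} (the capacity--Hausdorff measure result for $\Phi(x,t)=t^{p(x)}\log^{q(x)}(e+t)$ with $q<0$): you start from $C_{\Phi(\cdot,\cdot)}(E)=0$, build a Vitali cover, and aim at $\mathcal H^{h(\cdot)}(E)=0$. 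None of this has any bearing on the lemma, and the mismatch is not merely cosmetic: your sketch explicitly \emph{invokes} the inequality \eqref{holder inequality} as a tool in its third step, so as an argument for the lemma it would be circular even if the rest were filled in. In the paper this lemma is not proved at all; it is quoted from the standard references on variable exponent spaces.

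To actually prove the lemma you should argue at the level of the Luxemburg norm and the modular, with no capacities or coverings involved. By homogeneity it suffices to assume $\Vert u\Vert_{L^{p(\cdot)}}\leq 1$ and $\Vert v\Vert_{L^{q(\cdot)}}\leq 1$ and to show that the modular of $uv/2$ in $L^{r(\cdot)}$ is at most $1$ (the unit ball property). Pointwise, apply Young's inequality with the exponents $p(x)/r(x)$ and $q(x)/r(x)$, which are conjugate by the hypothesis $1/r(x)=1/p(x)+1/q(x)$:
\begin{equation*}
  \left(\frac{|u(x)v(x)|}{2}\right)^{r(x)}\leq \frac{r(x)}{p(x)}\,|u(x)|^{p(x)}+\frac{r(x)}{q(x)}\,|v(x)|^{q(x)}\leq |u(x)|^{p(x)}+|v(x)|^{q(x)},
\end{equation*}
after absorbing the factor $2^{-r(x)}\le 1/2$ (or keeping it to split the two terms); integrating over $\Omega$ and using that both modulars on the right are at most $1$ gives $\rho_{r(\cdot)}(uv/2)\leq 1$, hence $\Vert uv\Vert_{L^{r(\cdot)}}\leq 2\Vert u\Vert_{L^{p(\cdot)}}\Vert v\Vert_{L^{q(\cdot)}}$. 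Some care is needed when $r(x)=p(x)$ or $r(x)=q(x)$ on a set of positive measure (one exponent formally infinite), but that case is handled by the trivial estimate $|uv|^{r}\le |u|^{p}$ on the set where $|v|\le 1$ and symmetrically; this is the argument in the cited references.
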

\noindent We recall the first order variable exponent Sobolev space here.
\begin{definition}
    The function $u\in L^{p(\cdot)}(\Omega) $ belongs to space  ${ W^{1,p(\cdot)}(\Omega)}$, if its weak partial derivatives $\delta_1{u},.....,\delta_n{u}$ exist and belong to $L^{p(\cdot)}(\Omega)$. We define a semimodular on ${ W^{1,p(\cdot)}(\Omega)}$ by 
    \begin{center}
     $\rho_{W^{1,p(\cdot)}(\Omega)}(u) := \rho_{p(\cdot)}(u) + \rho_{p(\cdot)}(\triangledown u)= \int_{\Omega} |u|^{p(x)}+ |\triangledown u|^{p(x)} dx$.
 \end{center}
 It induces a (quasi-) norm
 \begin{center}
$ ||u||_{W^{1,p(\cdot)} (\Omega)} := \inf  \{ \lambda >0 : \rho_ {W^{1,p(\cdot)} (\Omega)}\left(\frac{u}{\lambda}\right)\leq 1\}$.
 \end{center}
  The space $W^{1,p(\cdot)} (\Omega)$ is called Sobolev space and its elements are called Sobolev functions.
\end{definition}
Now, let us recall some terminology to define Musielak-Orlicz spaces and Musielak-Orlicz-Sobolev spaces; more details about these spaces can be found in the books \cite{HH19, ML19}.
\begin{definition}
 A function $f:(0,\infty) \rightarrow \mathbb{R}$ is almost increasing if there exists a constant $a\geq 1$ such that $f(s) \leq af(t)$ for all $0< s< t .$ Similarly, a function $f:(0,\infty) \rightarrow \mathbb{R}$ is almost decreasing if there exists a constant $b\geq 1$ such that $f(s) \geq bf(t)$ for all $0< s< t .$ 
 \end{definition}
 \begin{definition}
Let $f:(0,\infty)\rightarrow\mathbb{R}$ and $p,q>0$.We say that f satisfies
\begin{enumerate}
\item[(i)]  $(Inc)_{p}$      if $f(t)/t^p$ is increasing; 
\item[(ii)] $(aInc)_{p}$     if $f(t)/t^p$ is almost increasing;
\item[(iii)]$(Dec)_{q}$      if $f(t)/t^q$ is decreasing;
\item[(iv)] $(aDec)_{q}$     if $f(t)/t^q$ is almost decreasing.
\end{enumerate}
\end{definition}
We say that $f$ satisfies $(aInc)$, $(Inc)$, $(aDec)$ or $(Dec)$ if there exists $p>1$ or $q<\infty$ such that $f$ satisfies $(aInc)_{p}$,$(Inc)_{p}$, $(aDec)_{q}$ or $(Dec)_{q}$, respectively.  
\begin{definition}
   Let $(\Omega,\Sigma,\mu)$ be a $\sigma$-finite, complete measure space. A function $\Phi: \Omega\times [0,\infty)\rightarrow [0,\infty]$ is said to be a (generalized) $\Phi$-prefunction on $(\Omega,\Sigma,\mu)$ if $x\rightarrow \Phi(x,|f(x)|)$ is measurable for every $f\in L^0(\Omega,\mu) $ and $\Phi(x,.)$ is a $\Phi$-prefunction for $\mu$-almost every $x\in \Omega $. We say that the $\Phi$-prefunction $\Phi$ is  a (generalized weak) $\Phi$-function if it satisfies $(aInc)_1$. 
The set of generalized weak $\Phi$-functions is denoted by $\Phi_w(\Omega,\mu)$.
\end{definition}
\begin{definition}
     We say that $\Phi\in\Phi_w(\Omega,\mu)$ satisfies $(A0)$ if there exists a constant $\beta\in(0,1]$ such that
   \begin{center} 
     $\beta\leq\Phi^{-1}(x,1)\leq{1/\beta}$  \quad for $\mu $-almost every $x\in \Omega$.
     \end{center}
\end{definition}
\begin{definition}
Let $\Phi\in\Phi_w(\Omega,\mu)$ and let $\rho_\Phi $ be given by
\begin{center}
  $\rho_\Phi(f)  := \int_\Omega\Phi(x,|f(x)|)d\mu(x)$
\end{center}	
for all $f\in L^0(\Omega,\mu)$. The function $\rho_\Phi $ is called a modular. The set 
\begin{equation*}
 L^{\Phi(\cdot,\cdot)}(\Omega,\mu) := \{f\in L^0(\Omega,\mu) :\rho_\Phi(\lambda f)< \infty \,\, for \,\, some \,\, \lambda > 0  \}  
 \end{equation*}
 is called a generalized Orlicz space or Musielak-Orlicz (M-O) space.
 \end{definition}
 \begin{definition}
 Let $\Phi\in\Phi_w(\Omega,\mu)$. The function $u\in L^{\Phi(\cdot,\cdot)}\cap L^1_{loc}(\Omega)$ belongs to the Musielak-Orlicz-Sobolev space $W^{1,\Phi(\cdot,\cdot)}(\Omega)$ if its weak partial derivatives $\delta_1{u},.....,\delta_n{u}$ exist and belong to $L^{\Phi(\cdot,\cdot)}(\Omega)$. We define a semimodular on $W^{1,\Phi(\cdot,\cdot)}(\Omega)$ by 
 \begin{center}
     $\rho_{W^{1,\Phi(\cdot,\cdot)}(\Omega)}(u) := \rho_\Phi(u) + \rho_\Phi(\triangledown u)$.
 \end{center}
 It induces a (quasi-) norm
 \begin{center}
$ ||u||_{W^{1,\Phi(\cdot,\cdot)} (\Omega)} := \inf  \{ \lambda >0 : \rho_ {W^{1,\Phi(\cdot,\cdot)} (\Omega)}\left(\frac{u}{\lambda}\right)\leq 1\}$.
 \end{center}
 \end{definition}
 Let us also recall the following three regularity conditions on the exponents.
\begin{definition}
    We say that the exponent $p(\cdot)$ is log-H\"older continuous if there exists a constant $c_1>0$ 
    \begin{center}
    $|p(x)-p(y)|\leq \frac{c_1}{\log(e+1/|x-y|)}$ whenever $x\in\mathbb R^n$ and $y\in\mathbb R^n.$
    \end{center}
\end{definition}
\begin{definition}
     We say that the exponent $p(\cdot)$ satisfies the  log-H\"older decay condition if there exist $p_{\infty}\in \mathbb{R}$ and a constant $c_2>0$ such that 
    \begin{center}
    $|p(x)-p_{\infty}|\leq \frac{c_2}{\log(e+|x|)}$ for all $x\in\mathbb R^n$.
    \end{center}
\end{definition}
\begin{definition}
     We say that the exponent $p(\cdot)$ is log-log-H\"older continuous if there exists a constant $c_3>0$ 
     \begin{center}
         $|p(x)-p(y)|\leq \frac{c_3}{\log(e+\log(e+1/|x-y|))}$ whenever $x\in\mathbb R^n$ and $y\in\mathbb R^n $.
     \end{center}
\end{definition}
\noindent The following two lemmas are easy consequences of the regularity conditions of the exponents.
\begin{lemma}\label{from logholder continuity}
    Let $p:\mathbb{R}^n\rightarrow \mathbb{R}$ be bounded and log-H\"older continuous and $B \subset  \mathbb{R}^n$ be any ball of radius r. If $p(x)\geq p(y)$  for any two points $x, y\in B$, then there is a constant $c_0 >0$ such that $r^{p(y) -p(x)}\leq c_0.$
\end{lemma}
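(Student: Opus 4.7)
The plan is to split into two cases depending on the size of the radius $r$, using the observation that the hypothesis $p(x) \geq p(y)$ makes the exponent $p(y) - p(x)$ non-positive. This already gives a free bound when $r \geq 1$, so all the real work occurs in the regime $r < 1$, where the log-H\"older control on $p(x) - p(y)$ is designed precisely to cancel the blow-up of $\log(1/r)$.

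First I would dispose of the trivial case $r \geq 1$: here $r^{p(y) - p(x)} \leq r^{0} = 1$, and one may set $c_0 = 1$. For $r < 1$, note that since $x, y$ both lie in $B$, we have $|x - y| \leq 2r < 2$. I would then rewrite
$$r^{p(y) - p(x)} = \exp\bigl((p(x) - p(y)) \log(1/r)\bigr),$$
and apply log-H\"older continuity together with the monotonicity inequality $\log(e + 1/|x - y|) \geq \log(e + 1/(2r))$ to obtain
$$(p(x) - p(y)) \log(1/r) \leq \frac{c_1 \log(1/r)}{\log(e + 1/(2r))}.$$

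The remaining step is an elementary one-variable estimate showing that the ratio $\log(1/r) / \log(e + 1/(2r))$ is uniformly bounded on $(0,1)$: as $r \to 0^{+}$ it tends to $1$ (since $\log(e + 1/(2r)) = \log(1/r) - \log 2 + o(1)$), and for $r$ close to $1$ the numerator is small while the denominator stays bounded away from $0$. Absorbing this universal bound $M$ into the constant yields $c_0 := e^{c_1 M}$, finishing the proof. I do not expect any genuine obstacle here, as the lemma is essentially a bookkeeping consequence of the definition of log-H\"older continuity; the substantive content is just the cancellation of logarithms in the final display.
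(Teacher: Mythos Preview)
Your proposal is correct and follows essentially the same route as the paper: both arguments use $|x-y|\le 2r$ together with log-H\"older continuity to bound $(p(x)-p(y))\log(1/r)$ by a constant times $\log(1/r)/\log(e+1/(2r))$, and then observe that this ratio is uniformly bounded. The only cosmetic difference is that the paper handles the last step by the inequality $1/r \le 2(e+1/(2r))$ and absorbs the resulting factor $2^{p(x)-p(y)}$ using boundedness of $p$, yielding the explicit constant $c_0 = e^{c_1}2^{p^+-p^-}$, whereas you argue the boundedness of the log ratio directly; the paper's version also avoids your case split on $r$.
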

\begin{proof}
    By log-H\"older continuity of $p(\cdot)$ we get 
    \begin{center}
        $\big(p(x)-p(y)\big) \leq \frac{c_1}{\log(e+1/|x-y|)}\leq \frac{c_1}{\log(e+1/2r)}$
    \end{center}
    which implies $\frac{r^{p(y) -p(x)}}{2^{p(x) -p(y)}}\leq (e+1/2r)^{p(x) -p(y)}\leq C_1, $ where $C_1= e^{c_1}$. 
    Therefore, we get $r^{p(y) -p(x)}\leq c_0$, where $c_0 = C_1 2^{(p^+ - p^-)}$.
    
\end{proof}
\begin{lemma}\label{from loglogholder continuity}
    Let $p:\mathbb{R}^n\rightarrow \mathbb{R}$ be bounded and log-log-H\"older continuous and $B \subset  \mathbb{R}^n$ be any ball of radius $r<1$. If $p(x)\geq p(y)$  for any two points $x, y\in B$, then there is a constant $c_4 >0$ such that $(\log(1/r))^{p(x) -p(y)}\leq c_4.$
\end{lemma}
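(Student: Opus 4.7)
The plan is to mirror the proof of Lemma \ref{from logholder continuity}, with log-log-H\"older continuity in place of log-H\"older continuity, and with the target quantity $r^{p(y)-p(x)}$ replaced by $(\log(1/r))^{p(x)-p(y)}$. Since $x,y\in B$ we have $|x-y|\leq 2r$, and the log-log-H\"older condition yields
$$0 \leq p(x)-p(y) \leq \frac{c_3}{\log\bigl(e+\log(e+1/|x-y|)\bigr)} \leq \frac{c_3}{\log\bigl(e+\log(e+1/(2r))\bigr)}.$$

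Next, I would take logarithms of the quantity to be controlled, obtaining $(p(x)-p(y))\,\log\log(1/r)$, and split on the size of $r$. When $1/e\leq r<1$, one has $0<\log(1/r)\leq 1$, and the conclusion is immediate from $p(x)-p(y)\geq 0$: the base is in $(0,1]$ and the exponent is non-negative, so $(\log(1/r))^{p(x)-p(y)}\leq 1$.

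For the remaining range $0<r<1/e$, the idea is to show $\log\log(1/r)\lesssim \log\bigl(e+\log(e+1/(2r))\bigr)$ uniformly, so that the displayed bound on $p(x)-p(y)$ absorbs the factor $\log\log(1/r)$ coming from the logarithm of the target. This comparison is routine: as $r\to 0^+$ both sides behave like $\log\log(1/r)$, while on a compact subinterval of $(0,1/e)$ both sides are bounded. Combining the two estimates gives
$$(p(x)-p(y))\,\log\log(1/r) \leq c_3\cdot\frac{\log\log(1/r)}{\log\bigl(e+\log(e+1/(2r))\bigr)} \leq C,$$
and exponentiating produces the desired bound $(\log(1/r))^{p(x)-p(y)}\leq e^{C}$. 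Taking $c_4$ to be the maximum of $e^C$ and $1$ handles both cases simultaneously.

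The only real obstacle is verifying the elementary comparison in the denominator with a constant independent of $r\in(0,1/e)$, but this is a one-line computation using $1/(2r)\geq 1$ and monotonicity of $\log$; no deeper ingredient beyond what is already used for Lemma \ref{from logholder continuity} is needed.
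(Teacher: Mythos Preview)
Your proposal is correct and takes essentially the same approach as the paper: both use the log-log-H\"older bound on $p(x)-p(y)$ together with the elementary comparison $\log(1/r)\lesssim e+\log(e+1/(2r))$ to conclude. The paper carries this out in one stroke without splitting on the size of $r$, whereas your case distinction for $r\in[1/e,1)$ is a harmless extra precaution.
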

\begin{proof}
    By log-log-H\"older continuity of $p(\cdot)$ we get 
    \begin{center}
        $\big(p(x)-p(y)\big) \leq \frac{c_3}{\log(e+\log(e+1/|x-y|))}\leq \frac{c_3}{\log(e+\log(e+1/2r))}$
    \end{center}
    which implies $(\log(1/2r))^{p(x) -p(y)}\leq (e+\log(e+1/2r))^{p(x) -p(y)}\leq C_3, $ where $C_3= e^{c_3}$. 
    Therefore, we get $(\log(1/r))^{p(x) -p(y)} \leq c_4$.
    
\end{proof}
\noindent We borrow the following two lemmas from \cite{DHHR11}.
\begin{lemma}\cite[Lemma 4.1.6]{DHHR11}\label{logholder continuous}
     Let $p:\mathbb{R}^n\rightarrow \mathbb{R}$ be bounded and satisfy the log-H\"older continuity condition. If $B \subset  \mathbb{R}^n$ is any ball of radius r then for any $x\in B $ we have $|B|^{\frac{1}{p_{B}^+}-\frac{1}{p(x)}} \leq c$.
\end{lemma}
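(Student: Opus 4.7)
The goal is to control the quantity $|B|^{1/p_B^+ - 1/p(x)}$ from above. Since $p(x) \le p_B^+$ by definition, the exponent is non-positive, so the only danger is when $|B|$ is small. The plan is to split into two regimes and use log-Hölder continuity exactly where it is needed.

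\textbf{Large balls.} If $r \ge 1/2$, then $|B| = \omega_n r^n \ge \omega_n/2^n$, so $|B|$ is bounded below by a positive constant. The exponent $1/p_B^+ - 1/p(x)$ lies in the bounded interval $[1/p^+ - 1/p^-, 0]$, hence $|B|^{1/p_B^+ - 1/p(x)}$ is bounded above by a constant depending only on $n$, $p^-$, $p^+$. This case requires no regularity of $p$ beyond boundedness.

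\textbf{Small balls.} For $r < 1/2$, the idea is to pair the log-Hölder decay of $p_B^+ - p(x)$ against the logarithmic size of $|B|$. First, for any $y \in B$, log-Hölder continuity together with $|x-y| \le 2r$ and the monotonicity of $t \mapsto 1/\log(e+1/t)$ gives
\[
p(y) - p(x) \le \frac{c_1}{\log(e + 1/|x-y|)} \le \frac{c_1}{\log(e + 1/(2r))}.
\]
Taking the essential supremum over $y \in B$ yields $p_B^+ - p(x) \le c_1/\log(e + 1/(2r))$, and consequently
\[
0 \le \frac{1}{p(x)} - \frac{1}{p_B^+} = \frac{p_B^+ - p(x)}{p(x)\, p_B^+} \le \frac{c_1}{(p^-)^2 \log(e + 1/(2r))}.
\]
Then take the logarithm of the quantity of interest:
\[
\log\!\left(|B|^{\frac{1}{p_B^+} - \frac{1}{p(x)}}\right) = \left(\frac{1}{p(x)} - \frac{1}{p_B^+}\right) \log\frac{1}{|B|} \le \frac{c_1}{(p^-)^2}\cdot \frac{\log(1/(\omega_n r^n))}{\log(e + 1/(2r))}.
\]

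\textbf{Bounding the ratio.} Finally, observe that $\log(1/(\omega_n r^n)) = n\log(1/r) - \log \omega_n$, while $\log(e + 1/(2r)) \ge \log(1/(2r)) = \log(1/r) - \log 2$. For $r < 1/2$ both quantities behave like $\log(1/r) \to \infty$, so their ratio stays bounded by a constant depending only on $n$. Combining this with the previous display yields a uniform upper bound on the logarithm of $|B|^{1/p_B^+ - 1/p(x)}$, and exponentiating gives the claimed constant $c$.

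The whole argument is essentially routine; there is no genuine obstacle, only bookkeeping. The single place where log-Hölder continuity is actually used is the chain $p_B^+ - p(x) \lesssim 1/\log(1/r)$, which is precisely designed to cancel the $\log(1/r)$ produced by $\log(1/|B|)$ so that the product stays bounded as $r \downarrow 0$.
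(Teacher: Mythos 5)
Your proposal is correct; the paper itself offers no proof of this lemma (it is imported from \cite{DHHR11}), and your argument is the standard one, in the same spirit as the paper's own proof of Lemma \ref{from logholder continuity}: the log-H\"older oscillation bound $p_B^+-p(x)\le c_1/\log(e+1/(2r))$ is paired against $\log(1/|B|)\approx n\log(1/r)$ so that the exponentiated quantity stays bounded as $r\to 0$, while large balls are trivial. One cosmetic point: for $r$ near $1/2$ your lower bound $\log(e+1/(2r))\ge\log(1/(2r))$ degenerates to $0$, so there you should instead use $\log(e+1/(2r))\ge 1$ together with the boundedness of the numerator on $1/4<r<1/2$; this is a trivial adjustment and does not affect the conclusion.
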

\begin{lemma}\cite[Theorem 4.5.7]{DHHR11}\label{norm of characteristic}
     Let $p:\mathbb{R}^n\rightarrow \mathbb{R}$ be bounded and log-H\"older continuous then $|| 1_B||_{L^{p(\cdot)}} \approx |B|^{\frac{1}{p_{B}}}$ for all balls $B \subset \mathbb{R}^n$, where $p_B = \dashint_B p(y) dy$.
\end{lemma}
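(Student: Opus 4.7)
The plan is to reduce the stated norm equivalence to the corresponding modular estimates and then invoke log-Hölder regularity to replace the variable exponent $p(x)$ by the average $p_B$ with only bounded multiplicative error. By the definition of the Luxemburg quasi-norm,
\[
\|1_B\|_{L^{p(\cdot)}} = \inf\Bigl\{\lambda > 0 \,:\, \int_B \lambda^{-p(x)}\,dx \leq 1\Bigr\},
\]
so it suffices to show that $\rho_{p(\cdot)}(1_B/(C|B|^{1/p_B})) \leq 1$ for some large $C$ (yielding $\|1_B\|_{L^{p(\cdot)}} \lesssim |B|^{1/p_B}$) and $\rho_{p(\cdot)}(1_B/(c|B|^{1/p_B})) \geq 1$ for some small $c$ (yielding the reverse bound).

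The technical key is the pointwise two-sided equivalence
\[
|B|^{(p_B - p(x))/p_B} \approx 1 \quad \text{uniformly for } x \in B \text{ and } B \subset \mathbb{R}^n,
\]
which I would deduce from log-Hölder continuity of $p$ in the same spirit as Lemma \ref{logholder continuous}, using that $|1/p(x) - 1/p_B|$ admits an analogous log-Hölder-type bound for any two points in $B$. This equivalence transforms the modular integral into
\[
\int_B |B|^{-p(x)/p_B}\,dx = \int_B |B|^{-1}\cdot |B|^{(p_B - p(x))/p_B}\,dx \approx |B|\cdot |B|^{-1} = 1.
\]
The scaling constants $C$ and $c$ are then extracted via $C^{-p(x)} \leq C^{-p^-}$ for $C \geq 1$ and $c^{-p(x)} \geq c^{-p^+}$ for $c \leq 1$, giving $\rho_{p(\cdot)}(1_B/(C|B|^{1/p_B})) \lesssim C^{-p^-}$ and $\rho_{p(\cdot)}(1_B/(c|B|^{1/p_B})) \gtrsim c^{-p^+}$, respectively, from which the two inequalities follow by choosing $C$ large enough and $c$ small enough.

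The main obstacle is establishing the pointwise equivalence uniformly for balls of all radii. For balls of small radius $r \leq 1$, log-Hölder continuity directly controls $|p(x) - p_B| \cdot |\log|B||$: the oscillation decays like $1/\log(e + 1/r)$ while $|\log|B||$ grows only like $n\log(1/r)$, so their product stays bounded, and Lemma \ref{logholder continuous} plugs in essentially as a black box. For balls of large radius the raw log-Hölder bound alone is not sharp enough against the size of $\log|B|$, and one must exploit the averaged nature of $p_B$ together with the global boundedness of $p$; the natural route is a dyadic decomposition of $B$ or a selection argument showing that if the set where $p(x)$ deviates significantly from $p_B$ had large relative measure it would force the average to shift correspondingly, and this self-correcting feature of the arithmetic mean is what ultimately keeps $(p_B - p(x))\log|B|/p_B$ bounded uniformly in $|B|$.
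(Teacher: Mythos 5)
The small-ball half of your plan is correct and standard: for $|B|\le 1$, log-H\"older continuity gives $|p(x)-p_B|\,\log(1/|B|)\le C$ uniformly for $x\in B$, hence $|B|^{(p_B-p(x))/p_B}\approx 1$, and your modular computation together with the unit-ball property of the Luxemburg norm yields the equivalence (up to the harmless slip that for $c\le 1$ one has $c^{-p(x)}\ge c^{-p^-}$, not $c^{-p^+}$). The genuine gap is the large-ball case, and it cannot be closed along the lines you sketch. Under the stated hypotheses (boundedness plus local log-H\"older continuity only), the pointwise claim $|B|^{(p_B-p(x))/p_B}\approx 1$ is false for large balls, and so is the lemma as literally stated: take $p\equiv 2$ on $B(0,R)$, $p\equiv 3$ outside $B(0,2R)$, and radially affine in between. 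For $R\ge 1$ the Lipschitz constant is $1/R\le 1$, so the log-H\"older constant is independent of $R$; yet for $B=B(0,4R)$ one has $p_B\ge 3-2^{-n}\ge 5/2$, while $\int_B\lambda^{-p(y)}\,dy\ge c_nR^n\lambda^{-2}$ forces $\|1_B\|_{L^{p(\cdot)}}\gtrsim |B|^{1/2}\gg |B|^{1/p_B}$ as $R\to\infty$. In particular the ``self-correcting feature of the arithmetic mean'' you invoke does not exist: the mean merely interpolates between the two values taken by $p$, each on a set of measure comparable to $|B|$, and $(p_B-p(x))\log|B|$ is unbounded there. The missing ingredient is the log-H\"older decay condition at infinity, which \cite[Theorem 4.5.7]{DHHR11} assumes (and which this paper assumes in all its theorems, even though it is omitted from the lemma as reproduced here; in the paper the lemma is only ever applied to balls of radius less than one, where your argument suffices).

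Note also that merely adding the decay condition does not rescue your pointwise strategy: for a huge ball containing the origin one can still have $|p(0)-p_B|$ of fixed size while $\log|B|$ is arbitrarily large, so $|B|^{(p_B-p(x))/p_B}\not\approx 1$ pointwise even for $p\in\mathcal{P}^{\log}$. The large-ball case must therefore be handled by an integrated estimate of the modular rather than a pointwise substitution: using $|p(y)-p_\infty|\le c/\log(e+|y|)$ one shows that for $|B|\ge 1$ both $\|1_B\|_{L^{p(\cdot)}}$ and $|B|^{1/p_B}$ are comparable to $|B|^{1/p_\infty}$, because the set where $p$ deviates from $p_\infty$ by more than a fixed multiple of $1/\log|B|$ contributes negligibly to the modular; combining this with your small-ball argument gives the full statement. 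That is essentially how the cited source proceeds (the paper itself offers no proof, only the citation), so the concrete fix is: restrict your pointwise lemma to $|B|\le 1$, add the decay hypothesis, and supply the separate comparison with $p_\infty$ for $|B|\ge 1$.
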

Next, we define the generalized Hausdorff $h(\cdot)$-measure. Note that the variable dimension Hausdorff measure was first introduced in \cite{HHL06}.
\begin{definition}
    Let $h:\mathbb{R}^n\times(0,\infty)\rightarrow(0, \infty) $ be a non-decreasing function such that $\lim_{t\rightarrow0}h(\cdot,t)=0.$ Then the Hausdorff h($\cdot$)-measure of $E\subset \mathbb{R}^n$ is defined by
    \begin{equation}
    H^{h(\cdot)}(E):= \lim_{\eta\rightarrow0} H^{h(\cdot)}_{\eta}(E),
    \end{equation}
 where $ H^{h(\cdot)}_{\eta}(E)=\inf \{ h(x_i,\diam(B_i)): E\subset \cup_{i}B_i, 0< \diam(B_i)\leq \eta\},$ where $x_i$ is the center of the ball $B_i.$ If $s$ is a continuous function on $\mathbb{R}^n$ and $h(\cdot, t) = t^{s(\cdot)}$, then we denote $ H^{s(\cdot)}(E):=  H^{h(\cdot)}(E).$
\end{definition}
For the proof of the 5B-covering lemma (or Vitali's covering lemma), see for example, \cite{EG92}.
\begin{lemma}(5B-covering lemma) \label{b covering lemma}
    Let $\mathcal{B}$ be a collection of open balls in $\mathbb{R}^n$ with uniformly bounded radii. Then there exists a disjoint sub-collection $\mathcal{B'}$ of $\mathcal{B}$ such that 
    $$ \bigcup_{B\in \mathcal{B}} B \subset \bigcup_{B\in \mathcal{B'}} 5B.$$
\end{lemma}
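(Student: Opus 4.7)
The plan is to mirror the proof strategy of Theorem~\ref{main theorem 2} but to argue directly in the Musielak--Orlicz setting. Since when $q<0$ one has $\Phi(x,t)\leq t^{p(x)}$, and therefore $C_{\Phi(\cdot,\cdot)}\leq C_{p(\cdot)}$, it is not possible simply to reduce to Theorem~\ref{main theorem 2}; one has to track the interaction between the logarithmic factor $\log^{q(\cdot)}(e+|\cdot|)$ in $\Phi$ and the variable-exponent estimates, and this is what produces the extra $q^+p^+/p^-$ correction in the gauge exponent.

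By countable sub-additivity of $\mathcal H^{h(\cdot)}$ it suffices to treat bounded $E$. For each $k\in\mathbb N$, choose $u_k\in S_{\Phi(\cdot,\cdot)}(E)$ with $\int_{\mathbb R^n}[\Phi(x,u_k)+\Phi(x,\nabla u_k)]\,dx<2^{-k}$; truncate to $0\leq u_k\leq 1$ and multiply by a smooth cutoff so that $u_k\equiv 1$ on an open neighbourhood of $E$, $u_k$ is compactly supported, and the modular has only decreased. For each $x\in E$, the continuous function $r\mapsto\dashint_{B(x,r)}u_k\,dy$ equals $1$ for small $r$ and tends to $0$ as $r\to\infty$, so there is a radius $r_k(x)$ at which it equals $\tfrac12$; moreover $\sup_{x\in E}r_k(x)\to 0$ as $k\to\infty$, because the $L^1$-norm of $u_k$ (controlled by its modular) must accommodate the set $\{u_k=1\}$.

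The main obstacle is the ball-wise lower bound
\begin{equation*}
 h(x,r_k(x))\;\leq\;c\int_{B(x,r_k(x))}\bigl[\Phi(y,u_k)+\Phi(y,\nabla u_k)\bigr]\,dy .
\end{equation*}
I would establish this in two stages. First, a Hedberg-type chain-of-balls argument, in which $\tfrac12$ is telescoped across the dyadic scales $2^{-j}r_k(x)$ and the $L^{p(\cdot)}$-Poincar\'e inequality is applied on each ball (with Lemmas~\ref{from logholder continuity}, \ref{logholder continuous}, and~\ref{norm of characteristic} controlling the variable-exponent corrections), yields a bound of the form $1\leq c\,r^{p(x)-n}(\log(1/r))^{p^+}\int_{B(x,r)}|\nabla u_k|^{p(y)}\,dy$. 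Second, the pointwise identity $|\nabla u_k|^{p(y)}=\Phi(y,\nabla u_k)\log^{-q(y)}(e+|\nabla u_k|)$ is combined with H\"older's inequality applied with the conjugate exponents $p^+/p^-$ and $p^+/(p^+-p^-)$ (after discarding the tail set $\{|\nabla u_k|>(1/r)^{1+\delta'}\}$, whose contribution is absorbed into the modular); this converts $\int|\nabla u_k|^{p(y)}$ into $\int\Phi(y,\nabla u_k)$ at the cost of a multiplicative factor $(\log(1/r))^{-q^+p^+/p^-+\delta}$, with $\delta$ arising from the slack parameter $\delta'$. Rearranging yields the claimed inequality, and it is precisely here that the exponent $p^+/p^-$ enters through the Musielak--Orlicz H\"older step.

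Finally, Lemma~\ref{b covering lemma} applied to $\{B(x,r_k(x))\}_{x\in E}$ extracts a disjoint subfamily $\{B_i=B(x_i,r_i)\}$ with $E\subset\bigcup_i 5B_i$. Using $h(x,5r)\approx h(x,r)$ and disjointness, the ball estimate gives
\[
 \mathcal H^{h(\cdot)}_{10\eta_k}(E)\leq\sum_i h(x_i,5r_i)\lesssim\sum_i\int_{B_i}[\Phi(y,u_k)+\Phi(y,\nabla u_k)]\,dy<c\,2^{-k},
\]
where $\eta_k=\sup_{x\in E}r_k(x)\to 0$. Letting $k\to\infty$ yields $\mathcal H^{h(\cdot)}(E)=0$, completing the proof.
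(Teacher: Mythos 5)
Your proposal does not address the statement at all. The statement to be proved is the $5B$-covering lemma (Lemma~\ref{b covering lemma}): given a family $\mathcal B$ of open balls in $\mathbb R^n$ with uniformly bounded radii, one must extract a pairwise disjoint subfamily $\mathcal B'$ with $\bigcup_{B\in\mathcal B}B\subset\bigcup_{B\in\mathcal B'}5B$. What you have written instead is a proof sketch of Theorem~\ref{main thorem 4} (the Musielak--Orlicz capacity versus Hausdorff measure result for $q<0$), and in the final step you even \emph{invoke} Lemma~\ref{b covering lemma} as a known tool. So, with respect to the assigned statement, the argument is vacuous and, if read as an attempted proof of it, circular: none of the capacity, Poincar\'e, or H\"older machinery you develop has any bearing on the purely geometric covering assertion, and the covering assertion itself is assumed rather than established.

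To prove the lemma itself (the paper cites \cite{EG92} rather than giving a proof), the standard argument is a greedy selection by dyadic radius classes: letting $R=\sup_{B\in\mathcal B}\rad(B)<\infty$, set $\mathcal B_j=\{B\in\mathcal B: R/2^{j+1}<\rad(B)\leq R/2^j\}$ for $j\geq 0$, and construct $\mathcal B'=\bigcup_j \mathcal B_j'$ inductively, taking $\mathcal B_j'$ to be a maximal disjoint subfamily of those balls in $\mathcal B_j$ that are disjoint from all balls already chosen in $\mathcal B_0',\dots,\mathcal B_{j-1}'$ (maximality via Zorn's lemma, or a countable exhaustion). Then any $B\in\mathcal B_j$ must meet some chosen ball $B'\in\mathcal B_0'\cup\dots\cup\mathcal B_j'$, and since $\rad(B)\leq R/2^j\leq 2\rad(B')$, the triangle inequality gives $B\subset 5B'$. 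If you want to salvage your text, it belongs as a (rough) alternative outline for Theorem~\ref{main thorem 4}, not here; and even there you would need to justify the truncation/cutoff step preserving admissibility and the claimed H\"older conversion of $\int|\nabla u_k|^{p(y)}$ into the $\Phi$-modular, neither of which is carried out.
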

Finally, we state the following lemma, which will be used for the proof of Theorem \ref{main theorem 1}.
\begin{lemma}\cite[Lemma 3.7]{HHKV03}\label{admissible continuous}
  Let $p(\cdot)$ be a bounded exponent and assume that $C^\infty(\mathbb{R}^n)\cap W^{1,p(\cdot)}(\mathbb{R}^n)$ is dense in  $W^{1,p(\cdot)}(\mathbb{R}^n).$ If $K$ is compact, then 
 \begin{equation*}
        C_{p(\cdot)}(K):= \inf_{u\in S'_{p(\cdot)}(K)}\int_{\mathbb{R}^n} |u|^{p(x)}+ |\triangledown u|^{p(x)} dx,
    \end{equation*}
  where  $S'_{p(\cdot)}(K)= S_{p(\cdot)}(K)\cap C^\infty(\mathbb{R}^n).$
\end{lemma}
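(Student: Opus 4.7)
The inequality $C_{p(\cdot)}(K)\leq\inf_{u\in S'_{p(\cdot)}(K)}\int_{\mathbb{R}^n}(|u|^{p(x)}+|\triangledown u|^{p(x)})\,dx$ is immediate since $S'_{p(\cdot)}(K)\subset S_{p(\cdot)}(K)$. For the reverse, fix $\epsilon>0$ and choose $u\in S_{p(\cdot)}(K)$ with $\int_{\mathbb{R}^n}|u|^{p(x)}+|\triangledown u|^{p(x)}\,dx<C_{p(\cdot)}(K)+\epsilon$. Let $U\supset K$ be an open set on which $u\geq 1$. Since $K$ is compact, I can pick $r>0$ so small that $K_{2r}:=\{x\in\mathbb{R}^n:\dist(x,K)<2r\}\subset U$.

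My strategy is to produce a smooth admissible function by mollifying $u$ and to use the density hypothesis to control the modular error. Let $\rho_s$ (for $0<s<r$) denote the standard nonnegative smooth mollifier supported in $B(0,s)$ with $\int\rho_s=1$, and set $u_s:=u*\rho_s$. Then $u_s\in C^\infty(\mathbb{R}^n)$ and $u_s\geq 0$ since $u\geq 0$. For any $x\in K_r$ we have $B(x,s)\subset K_{r+s}\subset K_{2r}\subset U$, so
\begin{equation*}
u_s(x)=\int u(x-y)\rho_s(y)\,dy\geq\int\rho_s(y)\,dy=1,
\end{equation*}
giving $u_s\geq 1$ on the open neighborhood $K_r$ of $K$. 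Thus $u_s\in S'_{p(\cdot)}(K)$ as soon as $u_s\in W^{1,p(\cdot)}(\mathbb{R}^n)$.

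It remains to show the modular convergence $\int(|u_s|^{p(x)}+|\triangledown u_s|^{p(x)})\,dx\to\int(|u|^{p(x)}+|\triangledown u|^{p(x)})\,dx$ as $s\to 0^+$, which is where the density assumption is crucial. For any $\eta>0$, pick smooth $\psi\in C^\infty(\mathbb{R}^n)\cap W^{1,p(\cdot)}(\mathbb{R}^n)$ with $\|u-\psi\|_{W^{1,p(\cdot)}}<\eta$. Since $\psi$ is smooth, the mollifications $\psi*\rho_s$ and their derivatives converge to $\psi$ and its derivatives locally uniformly, and standard estimates then give $\|\psi*\rho_s-\psi\|_{W^{1,p(\cdot)}}\to 0$ as $s\to 0$. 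Decomposing
\begin{equation*}
u_s-u=(u-\psi)*\rho_s+(\psi*\rho_s-\psi)+(\psi-u),
\end{equation*}
estimating the first summand by the (uniform in $s$) boundedness of convolution on $L^{p(\cdot)}$, and letting $\eta\to 0$, I conclude $u_s\to u$ in $W^{1,p(\cdot)}(\mathbb{R}^n)$. Norm convergence implies modular convergence via the unit-ball characterization of the variable exponent norm, so for $s$ small enough $u_s\in S'_{p(\cdot)}(K)$ with modular less than $C_{p(\cdot)}(K)+2\epsilon$. Letting $\epsilon\to 0$ completes the proof.

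\textbf{Main obstacle.} The hardest step is the $W^{1,p(\cdot)}$-convergence of the mollifications. Convolution operators are not unconditionally bounded on variable exponent $L^{p(\cdot)}$-spaces without some regularity of $p(\cdot)$, and even the convergence $\psi*\rho_s\to\psi$ for smooth $\psi$ requires more than pointwise or locally uniform convergence --- one needs a modular estimate capturing equi-integrability. The density hypothesis is precisely the tool that allows this passage to the limit to go through, by reducing the problem on a general $W^{1,p(\cdot)}$-function to the smooth case where classical arguments apply.
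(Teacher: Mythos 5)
First, note that the paper does not prove this lemma at all: it is imported verbatim from \cite[Lemma 3.7]{HHKV03}, so your argument has to stand on its own. Its easy half (that $S'_{p(\cdot)}(K)\subset S_{p(\cdot)}(K)$ gives one inequality) and the admissibility computation ($u_s\geq 1$ on $K_r$, $u_s\geq 0$) are fine, but the central analytic step has a genuine gap. You estimate $(u-\psi)*\rho_s$ by the ``uniform in $s$ boundedness of convolution on $L^{p(\cdot)}$'', and you also take for granted that $\psi*\rho_s\to\psi$ in $W^{1,p(\cdot)}(\mathbb{R}^n)$ for smooth $\psi$. Neither is available under the hypotheses of the lemma, which assume only that $p$ is bounded and that smooth functions are dense --- no log-H\"older continuity or any other regularity of $p(\cdot)$. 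Uniform boundedness of the mollifier family on $L^{p(\cdot)}$ is essentially a maximal-function type property and fails for general bounded measurable exponents; even translations $f\mapsto f(\cdot-h)$ are not bounded on $L^{p(\cdot)}$ unless $p$ is essentially constant, which also undermines the claim that locally uniform convergence of $\psi*\rho_s$ upgrades to convergence in the $W^{1,p(\cdot)}$ norm (the modular can misbehave at infinity, where the exponent may oscillate). In fact, without such a bound it is not even clear that $u*\rho_s\in L^{p(\cdot)}(\mathbb{R}^n)$, so admissibility of $u_s$ is itself conditional on the missing estimate. The density hypothesis cannot repair this, contrary to your closing remark: in your decomposition the non-smooth piece $u-\psi$ still has to be convolved, which is exactly the operation you cannot control; and if mollification were uniformly bounded and convergent, density of smooth functions would essentially be a theorem rather than a hypothesis --- a sign that you are implicitly assuming more than the lemma does. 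Your argument is the classical constant-exponent proof, and it does not transfer.

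A proof consistent with the stated hypotheses uses density as a black box and never mollifies $u$: replace $u$ by $\min(u,1)$ (this does not increase the modular), so that $u=1$ on the open set $U\supset K$; fix $\eta\in C_0^\infty(U)$ with $0\leq\eta\leq1$ and $\eta=1$ on a neighbourhood of $K$; choose, by the density assumption, a smooth $\varphi$ with $\|u-\varphi\|_{W^{1,p(\cdot)}(\mathbb{R}^n)}$ small, and glue: $w=(1-\eta)\varphi+\eta$. Then $w$ is smooth, $w=1$ near $K$, and since $u\eta=\eta$ one has $w-u=(1-\eta)(\varphi-u)$, whence $\|w-u\|_{W^{1,p(\cdot)}}\leq c(\eta)\,\|\varphi-u\|_{W^{1,p(\cdot)}}$; because $p^+<\infty$, norm closeness yields modular closeness, and one concludes after a minor additional adjustment to restore nonnegativity of the competitor. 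This cutoff-gluing argument is the type of proof behind the cited lemma and requires no boundedness of convolution on $L^{p(\cdot)}$.
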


        \section{Main Results }
We start with a lemma, the proof of which uses the techniques of \cite[Chapter 2.4.3]{EG92}.        
\begin{lemma} \label{main lemma}
    Let $p\in \mathcal{P}(\mathbb{R}^{n})$ and let $h:\mathbb{R}^n\times(0, \infty)  \rightarrow (0, \infty) $ be any bounded function such that $\lim_{t\rightarrow0} h(\cdot, t)=0$. Then 
    \begin{equation*}
         H^{h(\cdot)}\big(\{{x\in \mathbb{R}^{n}: \limsup_{r\rightarrow0}\frac{1}{h(x, r)}}\int_{B(x, r)} |f|^{p(y)} dy = \infty\}\big) =0 
    \end{equation*}
    for every $f\in L^{p(\cdot)}(\mathbb{R}^{n})$.
\end{lemma}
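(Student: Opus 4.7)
The plan is to adapt the classical Vitali-style argument of Evans--Gariepy, Chapter 2.4.3, to this variable-exponent and variable-gauge setting, using the 5B-covering lemma that has already been recorded as Lemma 2.22. Since $f \in L^{p(\cdot)}(\mathbb{R}^n)$, I first rescale $f$ by a constant so that $M := \int_{\mathbb{R}^n} |f(y)|^{p(y)}\, dy < \infty$; this rescaling is harmless, because multiplying $f$ by a positive constant only modifies the ratio $h(x,r)^{-1} \int_{B(x,r)} |f|^{p(y)}\, dy$ by a positive multiplicative factor (lying between two fixed powers of the constant determined by $p^-$ and $p^+$), so the set where its limsup equals $\infty$ is unchanged.

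For each $\lambda > 0$ define the level set
$$E_\lambda := \Big\{ x \in \mathbb{R}^n : \limsup_{r \to 0}\, \frac{1}{h(x,r)} \int_{B(x,r)} |f|^{p(y)}\, dy > \lambda \Big\},$$
so that the exceptional set in the statement is contained in $E_\lambda$ for every $\lambda$. Fix $\eta > 0$. For each $x \in E_\lambda$, the defining limsup condition furnishes a radius $r_x \in (0, \eta/10)$ with $\int_{B(x, r_x)} |f|^{p(y)}\, dy > \lambda\, h(x, r_x)$. Apply Lemma 2.22 to the family $\{B(x, r_x) : x \in E_\lambda\}$ (whose radii are uniformly bounded by $\eta/10$) to extract a countable disjoint subfamily $\{B_i\}_i$, $B_i = B(x_i, r_i)$, with $E_\lambda \subset \bigcup_i 5 B_i$. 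Since $\diam(5 B_i) = 10 r_i \leq \eta$, this is an admissible cover for $\mathcal H^{h(\cdot)}_\eta$, and I obtain
$$\mathcal H^{h(\cdot)}_\eta(E_\lambda) \leq \sum_i h(x_i, 10 r_i) \lesssim \sum_i h(x_i, r_i) < \frac{1}{\lambda} \sum_i \int_{B_i} |f|^{p(y)}\, dy \leq \frac{M}{\lambda},$$
using disjointness of the $B_i$ in the last step and the defining property of $r_i$ in the third. Sending $\eta \to 0$ and then $\lambda \to \infty$ delivers $\mathcal H^{h(\cdot)}(E_\infty) = 0$.

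The main obstacle is the second inequality, a doubling-type comparison $h(x, 10 r) \lesssim h(x, r)$ uniformly in $x$ for small $r$. The stated hypotheses on $h$ (boundedness and $h(\cdot, t) \to 0$ as $t \to 0$) do not literally imply such a comparison, but every concrete gauge to which the lemma is subsequently applied has the form $h(x,t) = t^{n-p(x)} \log^{\alpha(x)}(1/t)$, for which $h(x, 10 r)/h(x, r)$ is uniformly bounded for small $r$ by elementary manipulation of the power and the logarithm. I would therefore either promote this doubling to an explicit assumption in the lemma or verify it at each invocation. A minor subsidiary point, already addressed above, is that $f \in L^{p(\cdot)}(\mathbb{R}^n)$ guarantees finiteness of the modular only after a scalar rescaling of $f$; one must observe that this rescaling does not alter the exceptional set, which it does not.
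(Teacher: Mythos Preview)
Your proof is correct and follows exactly the paper's argument: define level sets $E_\lambda$, use the 5B-covering lemma to extract a disjoint subfamily, sum over it, then send $\eta\to 0$ and $\lambda\to\infty$. On the two issues you flag: the paper likewise absorbs the doubling comparison $h(x,10r)\lesssim h(x,r)$ into an unexplained constant $c$ (so your caveat is well placed), and your rescaling step is in fact unnecessary here because the paper's standing assumption $p^+<\infty$ guarantees that $f\in L^{p(\cdot)}$ already implies $\int_{\mathbb{R}^n}|f|^{p(y)}\,dy<\infty$.
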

\begin{proof}
    Let $\lambda >0$ and consider 
    \begin{equation*}
    E_{\lambda}:= \{ \limsup_{r\rightarrow0}\frac{1}{h(x, r)}\int_{B(x, r)} |f|^{p(y)} dy > \lambda\}.
    \end{equation*}
    Fix $\eta>0.$ Then for every $x\in E_{\lambda} $ we get a number $r_{x} \leq \eta$ such that 
        \begin{equation}\label{from the definition of Elambda}
        \int_{B(x, r_{x})} |f|^{p(y)} dy > \lambda h(x, r_x).
        \end{equation}
    By the $5B$-covering lemma (Lemma \ref{b covering lemma}), there is a countable subfamily of pairwise disjoint balls $B(x_i, 
 r_{x_i})$ such that $E_{\lambda} \subset \cup_{i=1}^{\infty} B(x_i, 5r_{x_i}) $. From the definition of Hausdorff measure, after using \eqref{from the definition of Elambda} for the balls $B(x_i, r_{x_i})$ and the fact that $E\subset E_{\lambda} $ for all $\lambda>0$, we get 
    \begin{eqnarray*}
          H^{h(\cdot)}_{\eta}(E)&\leq & c \sum_{i} h(x_i, r_{x_i})\leq c \sum_{i} \lambda^{-1} \int_{B(x_i,  r_{x_i})} |f|^{p(y)} dy\\
         &\leq& c \lambda^{-1} \int_{\mathbb{R}^{n}} |f|^{p(y)} dy.\\
    \end{eqnarray*}
    Then, by taking $\eta\rightarrow0$ we obtain $$ H^{h(\cdot)}(E) \leq c \lambda^{-1} \int_{\mathbb{R}^{n}} |f|^{p(y)} dy,$$ and finally the claim follows by choosing $\lambda$ arbitrarily large.
\end{proof}

\textbf{Proof of Theorem \ref{main theorem 2}}
    Since $C_{p(\cdot)}(E)=0,$ for every $i\in\mathbb{N},$ we have a non-negative function $u_i \in W^{1,p(\cdot)}(\mathbb{R}^n)$ such that $E\subset \interior\{u_i\geq 1\}$ and 
     $$\int_{\mathbb{R}^n} \big(|u|^{p(y)}+ |\triangledown u|^{p(y)}\big) dy\leq 2^{-ip^+}$$
    which implies that
    $$\int_{\mathbb{R}^n} \Big(\frac{|u|}{2^{-i}}\Big)^{p(y)}+ \Big(\frac{|\triangledown u|}{2^{-i}}\Big)^{p(y)} dy\leq 1$$
    and hence from the definition of norm, $||u_i||_{W^{1,p(\cdot)}(\mathbb{R}^n)}\leq 2^{-i}.$ Define $v_i:= u_1+....+u_i.$ Then $(v_i)$ is a Cauchy sequence in $W^{1,p(\cdot)}(\mathbb{R}^n),$ and since $W^{1,p(\cdot)}(\mathbb{R}^n)$ is a Banach space, there exists $u\in W^{1,p(\cdot)}(\mathbb{R}^n)$ such that $v_i\rightarrow u$ and hence
    \begin{center}
        $||u||_{1,p(\cdot)}\leq \lim_{i\rightarrow \infty} ||v_i||_{1,p(\cdot)}\leq \sum_{i=1}^{\infty}||u_i||_{1,p(\cdot)}\leq\sum_{i=1}^{\infty}2^{-i}\leq 1. $
    \end{center}
    For every natural number $k$ and $x\in E$ we find $r$ such that $u_{i}(y) \geq 1$ for every $i= 1,....,k$ and almost every $y\in B(x, r),$ which means that $E\subset \interior\{u\geq m\}$ for all $m\geq 1$. Then for $r$ small enough we get $B(x, r) \subset \interior\{u\geq m\}$, and hence $ \dashint_{B(x, r)} u dy \geq m$. Therefore
    \begin{equation}\label{main equation}
        \limsup_{r\rightarrow0}\dashint_{B(x, r)} u dy = \infty.
    \end{equation}
    Suppose that $x$ is such that 
    \begin{equation}
        \limsup_{r\rightarrow0} r^{\frac{-(n-p(x))}{p(x)}}\log^{\frac{p^++\delta}{p^+}}\left(\frac{1}{r}\right)\Vert\triangledown u\Vert_{L^{p(\cdot)}(B(x,r))}=: C <\infty.
    \end{equation}
     We choose $R\in (0, 1) $ so that, for every $0<r\leq R$, 
    \begin{equation*}
        \Vert\triangledown u\Vert_{L^{p(\cdot)}(B(x,r))}< C r^{\frac{n-p(x)}{p(x)}}\log^{\frac{-p^+-\delta}{p^+}}\left(\frac{1}{r}\right).
    \end{equation*}
  Denote $B_i:= B(x, 2^{-i}R)$. Then using the classical Poincar\'e inequality and the H\"older inequality \eqref{holder inequality}, we have  
    \begin{eqnarray*}
        |{ u}_{B_{i+1}} - { u}_{B_{i}}|
        &\leq& \dashint_{B_{i+1}}|u-{ u}_{B_{i}}| dy\\ &\leq& c2^{(i+1)n} \int_{B_i}|u-{ u}_{B_{i}}| dy\\
        &\leq& c2^{-i(1-n)} \int_{B_i} |\triangledown u| dy \\
        &\leq& c2^{-i(1-n)} ||1||_{L^{p'(\cdot)}(B_i)} ||\triangledown u||_{L^{p(\cdot)}(B_i)}, 
    \end{eqnarray*}
    where $p'(\cdot) = \frac{p(\cdot)}{p(\cdot)-1}$.
    Lemma \ref{norm of characteristic} gives $||1||_{L^{p'(\cdot)}(B_i)}\approx |B_i|^{\frac{1}{p'_{B_i}}}\leq |B_i|^{\frac{1}{{p'_{B_i}}^+}}$ and by Lemma \ref{logholder continuous} we have $|B_i|^{\frac{1}{{p'_{B_i}}^+}} \leq c |B_i|^{\frac{1}{p'(x)}} $. Therefore,
    $ ||1||_{L^{p'(\cdot)}(B_i)}\leq c(2^{-i}R)^{\frac{np(x)-n}{p(x)}}$.
    Hence, for every $i\in \mathbb{N}$, 
    \begin{equation*}
        |{ u}_{B_{i+1}} - { u}_{B_{i}}|\leq c 2^{-i(1-n+n-\frac{n}{p(x)}+\frac{n-p(x)}{p(x)})}\log^{\frac{-p^+-\delta}{p^+}}\left(\frac{1}{2^{-i}}\right)= c\log^{\frac{-p^+-\delta}{p^+}}\left(\frac{1}{2^{-i}}\right).
    \end{equation*}
    Therefore, for $k>j $ we obtain
    \begin{equation*}
       |{ u}_{B_{k}} - { u}_{B_{j}}| \leq \sum_{i=j}^{k-1}|{ u}_{B_{i+1}} - { u}_{B_{i}}|\leq c\sum_{i=j}^{k-1}\log^{\frac{-p^+-\delta}{p^+}}\left(\frac{1}{2^{-i}}\right)= c \sum_{i=j}^{k-1} i^{\frac{-p^+-\delta}{p^+}}(\log 2)^{\frac{-p^+-\delta}{p^+}}  
    \end{equation*}
    which shows that $({ u}_{B_{i}})$ is a Cauchy sequence, since the above series is finite. Hence we have 
    \begin{equation*}
        \limsup_{r\rightarrow0}\dashint_{B(x, r)} u dy < \infty
    \end{equation*}
    which together with \eqref{main equation} implies that $x\notin E$. Thus, we can write
    \begin{equation*}
        E\subset\{ x\in \mathbb{R}^{n}:\limsup_{r\rightarrow0} r^{\frac{-(n-p(x))}{p(x)}}\log^{\frac{p^++\delta}{p^+}}\left(\frac{1}{r}\right)\Vert\triangledown u\Vert_{L^{p(\cdot)}(B(x,r))}= \infty\}. 
    \end{equation*}
    For $r\leq 1$ we obtain by Lemma \ref{from logholder continuity} that 
    \begin{equation*}
    r^{\frac{-(n-p(x))}{p(x)}}\log^{\frac{p^++\delta}{p^+}}\left(\frac{1}{r}\right) \leq c r^{\frac{-(n-p(x))}{p(y)}}\log^{\frac{p^++\delta}{p^+}}\left(\frac{1}{r}\right)
    \end{equation*}
    for every $y\in B(x, r)$. Therefore 
    \begin{center}
        $r^{\frac{-(n-p(x))}{p(x)}}\log^{\frac{p^++\delta}{p^+}}\left(\frac{1}{r}\right)\Vert\triangledown u\Vert_{L^{p(\cdot)}(B(x,r))}\leq c ||r^{\frac{-(n-p(x))}{p(\cdot)}}\log^{\frac{p^++\delta}{p^+}}\left(\frac{1}{r}\right)|\triangledown u||\,|_{L^{p(\cdot)}(B(x,r))}.$
    \end{center}
    Hence 
    \begin{equation*}
        E\subset\{ x\in \mathbb{R}^{n}:\limsup_{r\rightarrow0} ||r^{\frac{-(n-p(x))}{p(\cdot)}}\log^{\frac{p^++\delta}{p^+}}\left(\frac{1}{r}\right)|\triangledown u||\,|_{L^{p(\cdot)}(B(x,r))}= \infty\}. 
    \end{equation*}
    Note that  
    \begin{equation*}
    \int_{B(x, r)}r^{\frac{-p(y)(n-p(x))}{p(y)}}\log^{\frac{p(y)(p^++\delta)}{p^+}}\left(\frac{1}{r}\right)|\triangledown u|^{p(y)} dy\leq r^{p(x)-n}\log^{p^++\delta}\left(\frac{1}{r}\right)\int_{B(x, r)} |\triangledown u|^{p(y)} dy
    \end{equation*}
    and the norm is finite if and only if the modular is, and hence we conclude 
    \begin{equation*}
        E\subset\{ x\in \mathbb{R}^{n}:\limsup_{r\rightarrow0} r^{p(x)-n}\log^{p^++\delta}\left(\frac{1}{r}\right)\int_{B(x, r)} |\triangledown u|^{p(y)} dy = \infty\}.
    \end{equation*}
     Therefore, the claim follows from Lemma \ref{main lemma}.\qed\\
\begin{remark}
If, in addition, $p(\cdot)$ also satisfies the log-log-H\"older continuity condition, then the above theorem holds for the gauge function $h(\cdot, t)=t^{n-p(\cdot)}\log^{-p(\cdot)-\delta}\left(\frac{1}{t}\right). $ The proof follows in a similar fashion, with the help of  Lemma \ref{from loglogholder continuity}. 
\end{remark}

\textbf{Proof of Theorem \ref{main theorem 1}}
For simplicity, let us take $R=2^{-m}$ for some integer $m.$ For any $\epsilon>0,$ Lemma \ref{admissible continuous} yields a continuous function $v\in W^{1,p(\cdot)}(\Omega)\cap C^{\infty}(\mathbb{R}^n)$ satisfying 
\begin{equation}\label{from the defintion of capacity}
\int_{\mathbb{R}^n} |v|^{p(y)}+ |\triangledown v|^{p(y)} dy\leq  C_{p(\cdot)}(E)+\epsilon
\end{equation}
with $v\geq 1$ on a neighbourhood of $K.$ Let us take a Lipschitz function $\phi$ on $\mathbb{R}^n$ such that $\phi=1$ on $B(x_0,2^{-m})$ and $\phi=0$ outside $B(x_0,2^{-m+1}),$ and consider another function $u=v\phi.$ Note that
  \begin{eqnarray}\label{v to u}
      \int_{\mathbb{R}^n}|\triangledown u|^{p(y)}\,dy=\int_{\mathbb{R}^n}|v\triangledown\phi+\phi\triangledown v|^{p(y)}\,dy\leq C\int_{\mathbb{R}^n}(|v|^{p(y)}+|\triangledown v|^{p(y)})\,dy
  \end{eqnarray} 
  for a positive constant $C.$ We set, for any $\delta >0,$\\
  $$E_{\delta, K_1, K_2}=\left\{x\in E: \exists\,\, \rho_x< K_2\: \text{such that} \displaystyle\int_{B(x,\rho_x)}\vert \triangledown u\vert^{p(y)}\,dy \geq K_1^{p+}\rho_x^{n-p(x)}\log^{-p(x) -\delta}(K_2/\rho_x)\right\},$$
 where we will choose $K_1\leq 1$ and $K_2$ in a way such that $E_{\delta, K_1, K_2}=E.$\\
 
   Suppose that $E\setminus E_{\delta, K_1, K_2}\neq \emptyset $ and 
   let $x\in E\setminus E_{\delta, K_1, K_2}.$ Let $\tilde{x}\in\mathbb{R}^n$ be such that $|x-\tilde{x}|=2^{-m+3}$ and take a ball $B_1$ centered at $\tilde{x}$ with radius $2^{-1}|x-\tilde{x}|=2^{-m+2}.$ Clearly, $B_1$ lies outside $B(x,2^{-m+2}).$  
   We then consider a sequence of balls (see \cite{KK15} for the existence of such balls) $B_1,\,B_2,\,B_3,\ldots$ with radii $R_1,\, R_2,\, R_3, \ldots,$ where $R_l=2^{-l}|x-\tilde{x}|$ for all $l\geq 1,$ and the centers of the balls lie on the line that joins $x$ to $\tilde{x},$ and there exists a constant $S\geq 1,$ having the following properties: 
   \begin{itemize}
       \item {$S^{-1}\diameter(B_i)\leq \dist(x, B_i)\leq S \diameter(B_i),$}
       \item {\text{There exists} $D_i\subset B_i\cap B_{i+1}$ so that $B_i\cup B_{i+1}\subset S D_i,$ }
       \item { No ball of $\mathbb{R}^n$ can belong to $S$ balls $\lambda B_i$ for every $\lambda\geq 1$.}
   \end{itemize}
   Note that the ball $B_1$ also lies outside $B(x_0,2^{-m+1}),$ since for any $y\in B_1,$
   $d(x_0,y)\geq d(x_0,y)-d(x_0,x)\geq 2^{-m+1}.$ Therefore $u_{B_1}=0,$ and hence by the telescopic argument and the classical Poincar\'e inequality,
    \begin{eqnarray*}
       1& \leq & \vert u(x)-u_{B_{1}}\vert\\
       & \leq & \sum_{l\geq 1}\vert u_{B_l}-u_{B_{l+1}}\vert\\
       & \leq & \sum_{l\geq 1}\left(\vert u_{B_l}- u_{D_l}\vert + \vert u_{B_{l+1}}- u_{D_l}\vert\right)\\
       & \leq & \sum_{l\geq 1} \left(\dashint_{D_l} \vert u- u_{B_l}\vert + \dashint_{D_l}  \vert u- u_{B_{l+1}}\vert\right)\\
       &\leq & c \sum_{l\geq 1} \dashint_{B_l}\vert u-u_{B_l}\vert dy\\
       &\leq & c \sum_{l\geq 1} \frac{\diameter(B_l)}{\vert B_l\vert} \int_{B_l} |\triangledown u| dy.
    \end{eqnarray*}   
Take $\gamma_l=:S2^{-m+1-l}+S^22^{-m+1-l},$ for $l\geq 1.$ Note that we have chosen $\gamma_l$ in such a way that $B_l\subset B(x,\gamma_l)$ for any $l\geq 1.$ Then, by H\"older's inequality,
    \begin{eqnarray}\label{before norm estimate}
 \nonumber     1 &\leq & c \sum_{l\geq 1} \frac{\diameter(B_l)}{\vert B_l\vert} \int_{B(x,\gamma_l)} |\triangledown u| dy \\ \nonumber
      &\leq& c \sum_{l\geq 1} \frac{\diameter(B_l)}{\vert B_l\vert} \Vert1\Vert_{L^{p'(\cdot)}(B(x,\gamma_l))} \Vert\triangledown u\Vert_{L^{p(\cdot)}(B(x,\gamma_l))}\\ 
       &\leq& c \sum_{l\geq 1}  (2^{-m+3-l})^{1-n}\vert B(x,\gamma_l)\vert^\frac{p(x)-1}{p(x)}\Vert\triangledown u\Vert_{L^{p(\cdot)}(B(x,\gamma_l))}.
       \end{eqnarray}
    
 Choose $K_2=2^{-m+2}S(1+S).$ Then $\gamma_l=2^{-l-1}K_2$ for all $l\geq 1.$ Since $x\in E\setminus E_{\delta, K_1, K_2},$ for all $l\geq 1 ,$ we have 
          \begin{equation}\label{gradient inequality}
            \int_{B(x,\gamma_l)}\vert \triangledown u\vert^{p(y)} dy\leq  K_1^{p^+}\gamma_l^{n-p(x)}\log^{-p^+-\delta}\left(\frac{K_2}{\gamma_l}\right)  
          \end{equation}
          and we claim that
          \begin{equation}\label{norm estimate}
          \Vert\triangledown u\Vert_{L^{p(\cdot)}(B(x,\gamma_l))} \leq K_0K_1 \gamma_l^\frac{n-p(x)}{p(x)}\log^\frac{-p^+-\delta}{p^+}(K_2/\gamma_l),
          \end{equation}
      where $$K_0:=2^{\frac{1}{p^-}}\max\{e^{c_0\frac{n-p^-}{(p^-)^2}}, \tilde{K_2}^{-\frac{n-p^-}{(p^-)^2}}\}\geq 1.$$  \\
\textbf{Proof of the claim:}          
   We divide the ball $B(x,\gamma_l)$ into two sets $B'$ and $B'',$ where $B'=\{y\in B(x,\gamma_l): p(y)\geq p(x)\}$ and $B''=B(x,\gamma_l)\setminus B'.$ In the set $B',$ we use Lemma \ref{from logholder continuity} to obtain 
   \begin{equation}\label{on B'}
   \frac{1}{\gamma_l^{p(y)}} \leq\frac{e^{c_0}}{\gamma_l^{p(x)}}\quad\text{for all}\,\, y\in B'
   \end{equation}
   while in the set $B'',$ we simply use the fact that $\gamma_l/K_2\leq 1$ to obtain 
   \begin{equation*}
   \frac{K_2^{p(y)}}{\gamma_l^{p(y)}} <\frac{K_2^{p(x)}}{\gamma_l^{p(x)}}\quad\text{for all}\,\, y\in B''
   \end{equation*}
   and hence 
  \begin{equation}\label{on B''} 
   \frac{1}{\gamma_l^{p(y)}} \leq\frac{1}{\tilde{K_2}\gamma_l^{p(x)}} \quad\text{for all}\,\, y\in B'',
   \end{equation}
   where $\tilde{K_2}=\min\{1,K_2^{p^+-p^-}\}.$
   Using \eqref{gradient inequality}, \eqref{on B'} and \eqref{on B''} we find that    
           \begin{eqnarray*}
            & & \int_{B(x,\gamma_l)}\left(\frac{\vert \triangledown u(y)\vert}{K_0K_1 \gamma_l^{\frac{n-p(x)}{p(x)}}\log^{\frac{-p^+-\delta}{p^+}}(K_2/\gamma_l)}\right)^{p(y)}dy \\
            &=& \int_{B'}\left(\frac{\vert \triangledown u(y)\vert}{K_0K_1 \gamma_l^{\frac{n-p(x)}{p(x)}}\log^{\frac{-p^+-\delta}{p^+}}(K_2/\gamma_l)}\right)^{p(y)}dy +\int_{B''} \left(\frac{\vert \triangledown u(y)\vert}{K_0K_1 \gamma_l^{\frac{n-p(x)}{p(x)}}\log^{\frac{-p^+-\delta}{p^+}}(K_2/\gamma_l)}\right)^{p(y)}dy\\
            &\leq & \int_{B'}\frac{\vert \triangledown u(y)\vert^{p(y)} e^{c_0\frac{n-p(x)}{p(x)}}}{K_0^{p^-}K_1^{p^+} \gamma_l^{n-p(x)}\log^{-p^+-\delta}(K_2/\gamma_l)}dy + \int_{B''}\frac{\vert \triangledown u(y)\vert^{p(y)}} {K_0^{p^-}K_1^{p^+}\tilde{K_2}^{\frac{n-p(x)}{p(x)}} \gamma_l^{n-p(x)}\log^{-p^+-\delta}(K_2/\gamma_l)}dy \\
            &\leq & \int_{B'}\frac{\vert \triangledown u(y)\vert^{p(y)} e^{c_0\frac{n-p^-}{p^-}}}{K_0^{p^-}K_1^{p^+} \gamma_l^{n-p(x)}\log^{-p^+-\delta}(K_2/\gamma_l)}dy + \int_{B''}\frac{\vert \triangledown u(y)\vert^{p(y)}} {K_0^{p^-}K_1^{p^+}\tilde{K_2}^{\frac{n-p^-}{p^-}} \gamma_l^{n-p(x)}\log^{-p^+-\delta}(K_2/\gamma_l)}dy \\
            &\leq & \int_{B'}\frac{\vert \triangledown u(y)\vert^{p(y)}} {2K_1^{p^+} \gamma_l^{n-p(x)}\log^{-p^+-\delta}(K_2/\gamma_l)}dy + \int_{B''}\frac{\vert \triangledown u(y)\vert^{p(y)}} {2K_1^{p^+} \gamma_l^{n-p(x)}\log^{-p^+-\delta}(K_2/\gamma_l)}dy \\
            &\leq & \frac{1}{2} + \frac{1}{2} =1,
          \end{eqnarray*}
          which establishes the norm estimate \eqref{norm estimate}.\\
     Inserting this norm estimate \eqref{norm estimate} back into \eqref{before norm estimate}, we get
     \begin{eqnarray*}
     1 &\leq &c K_0 K_1 \sum_{l\geq 1} (2^{-m+3-l})^{1-n}\left(2^{-m+1-l} S(1+S)\right)^{\frac{n(p(x)-1)}{p(x)}+\frac{n-p(x)}{p(x)}}\log^\frac{-p^+-\delta}{p^+}(2^{l+1})\\
     &\leq & c K_0 K_1 S^{n-1}(1+S)^{n-1}(\log2)^\frac{-p^+-\delta}{p^+}\sum_{l\geq 1} (l+1)^{\frac{-p^+-\delta}{p^+}}
     \end{eqnarray*}
      which gives us a contradiction, noting that the series on the right hand side converges and by choosing $ K_1<\min\bigg\{1,\frac{1}{c K_0  S^{n-1}(1+S)^{n-1}(\log2)^\frac{-p^+-\delta}{p^+}\displaystyle\sum_{l\geq 1} (l+1)^{\frac{-p^+-\delta}{p^+}}}\bigg\}$. Hence, with the above choice of $K_1$ and $K_2,$ we have $E\setminus E_{\delta, K_1, K_2}=\emptyset$ and therefore, for every $x\in E,$ there exists $\rho_x<K_2$ such that
      \begin{equation*}
            \int_{B(x,\rho_x)}\vert \triangledown u\vert^{p(y)} dy\geq K_1^{p^+}\rho_x^{n-p(x)}\log^{-p^+-\delta}\left(\frac{K_2}{\rho_x }\right).
          \end{equation*} 
           By the $5B$ covering lemma (Lemma  \ref{b covering lemma}), we can pick a collection of pairwise disjoint balls $B_i=B(x_i,\rho_{x_i})$ such that $E\subset\cup_iB(x_i,5\rho_{x_i}).$ Now summing over $i$ we get 
           \begin{equation*}
           \int_{\mathbb{R}^n}\vert \triangledown u\vert^{p(y)} dy\geq\sum_{i} \int_{B_i}\vert \triangledown u\vert^{p(y)} dy\geq K_1^{p^+}\sum_{i}\rho_{x_i}^{n-p(x_i)}\log^{-p^+-\delta}\left(\frac{K_2}{\rho_{x_i} }\right)\geq c H^{h(\cdot)}(E) 
          \end{equation*}
         and then taking $\epsilon$ to zero after combining \eqref{from the defintion of capacity} and \eqref{v to u} we get the required inequality $ H^{h(\cdot)}(E) \leq c C_{p(\cdot)}(E)$.\qed
        \vspace{1cm} 

Next, we provide two auxiliary lemmas for the proof of the last two theorems.
\begin{lemma}\label{main lemma 3}
 Let $\Phi:\mathbb{R}^n\times[0,\infty)\rightarrow[0,\infty)$  be given by $\Phi(x,t):=t^{p(x)}\log^{q(x)}(e+t)$, $q(x) >0$ for all $x\in \mathbb{R}^n$. If $||\triangledown u||_{L^{\Phi(\cdot,\cdot)}(\mathbb{R}^n)} \leq 1,$ then 
 \begin{equation}
     ||\,|\triangledown u| (\log(e+|\triangledown u|))^{\frac{q(\cdot)}{p^+}}||_{L^{p(\cdot)}(\mathbb{R}^n)} \leq 1.
 \end{equation}
\end{lemma}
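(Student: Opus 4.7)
The proof hinges on the unit ball property for Luxemburg norms, so the whole argument reduces to comparing two modulars pointwise.

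The plan is to first translate both the hypothesis and the conclusion into modular inequalities. The hypothesis $\||\triangledown u|\|_{L^{\Phi(\cdot,\cdot)}(\mathbb{R}^n)}\leq 1$ together with the unit ball property for generalized weak $\Phi$-functions gives
\[
\rho_{\Phi}(\triangledown u)=\int_{\mathbb{R}^n}|\triangledown u|^{p(x)}\log^{q(x)}(e+|\triangledown u|)\,dx\leq 1.
\]
Likewise, writing $g:=|\triangledown u|(\log(e+|\triangledown u|))^{q(\cdot)/p^+}$, the desired conclusion $\|g\|_{L^{p(\cdot)}(\mathbb{R}^n)}\leq 1$ is equivalent, again by the unit ball property, to
\[
\int_{\mathbb{R}^n}|\triangledown u|^{p(x)}\log^{\frac{p(x)q(x)}{p^+}}(e+|\triangledown u|)\,dx\leq 1.
\]

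Next, I would compare the two integrands pointwise. Since $p(x)\leq p^+$ and $q(x)>0$, we have
\[
\frac{p(x)q(x)}{p^+}\leq q(x)\qquad\text{for all }x\in\mathbb{R}^n.
\]
Crucially, $\log(e+t)\geq 1$ for every $t\geq 0$, so raising it to a smaller nonnegative exponent produces a smaller number; consequently
\[
\log^{\frac{p(x)q(x)}{p^+}}(e+|\triangledown u|)\leq \log^{q(x)}(e+|\triangledown u|).
\]
Multiplying by $|\triangledown u|^{p(x)}$ and integrating gives
\[
\int_{\mathbb{R}^n}|\triangledown u|^{p(x)}\log^{\frac{p(x)q(x)}{p^+}}(e+|\triangledown u|)\,dx\leq \int_{\mathbb{R}^n}|\triangledown u|^{p(x)}\log^{q(x)}(e+|\triangledown u|)\,dx\leq 1,
\]
which is the target modular estimate, and hence the claimed norm bound.

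There is no genuine obstacle here; the only subtle point is the direction of the inequality for the logarithm, which relies on $\log(e+t)\geq 1$ and on the positivity of $q(\cdot)$. If $q(\cdot)$ were allowed to be negative the inequality would reverse, which explains why this lemma is stated only for the $q>0$ regime and why Theorem \ref{main thorem 4} (the $q<0$ case) will need a separate auxiliary estimate.
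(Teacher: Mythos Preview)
Your proof is correct and follows essentially the same approach as the paper's own proof: both translate the norm assumption and conclusion into modular inequalities via the unit ball property, and then use the pointwise comparison $\log^{p(x)q(x)/p^+}(e+|\triangledown u|)\leq \log^{q(x)}(e+|\triangledown u|)$, which holds because $p(x)\leq p^+$, $q(x)>0$, and $\log(e+t)\geq 1$. Your version is slightly more explicit about why the logarithmic inequality goes in the right direction, but the argument is the same.
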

\begin{proof}
    Since we have $||\triangledown u||_{L^{\Phi(\cdot,\cdot)}(\mathbb{R}^n)} \leq 1$, if we can show that $\rho_{p(\cdot)}(|\triangledown u |(\log(e+|\triangledown u|))^{\frac{q(\cdot)}{p^+}})\leq 1$ then we will get $ ||\,|\triangledown u| (\log(e+|\triangledown u|))^{\frac{q(\cdot)}{p^+}}||_{L^{p(\cdot)}(\mathbb{R}^n)} \leq 1$.\\
    Consider, 
    \begin{eqnarray*}
        \rho_{p(\cdot)}(|\triangledown u| (\log(e+|\triangledown u|))^{\frac{q(y)}{p^+}}) &=& \int_{\Omega} |\triangledown u|^{p(y)} (\log(e+|\triangledown u|))^{\frac{q(y) p(y)}{p^+}} dy\\
        &\leq& \int_{\Omega} |\triangledown u|^{p(y)} (\log^{q(y)}(e+|\triangledown u|)) dy\\
        &\leq & 1,
    \end{eqnarray*}
    since $||\triangledown u||_{L^{\Phi(\cdot,\cdot)}(\mathbb{R}^n)} \leq 1$. Therefore, we get $ ||\,|\triangledown u| (\log(e+|\triangledown u|))^{\frac{q(\cdot)}{p^+}}||_{L^{p(\cdot)}(\mathbb{R}^n)} \leq 1$.\\
\end{proof}
\begin{lemma}\label{main lemma 4}
 Let $\Phi:\mathbb{R}^n\times[0,\infty)\rightarrow[0,\infty)$  be given by $\Phi(x,t):=t^{p(x)}\log^{q(x)}(e+t)$, $q(x) <0$ for all $x\in \mathbb{R}^n$. If $||\triangledown u||_{L^{\Phi(\cdot,\cdot)}(\mathbb{R}^n)} \leq 1,$ then 
 \begin{equation}
     ||\,|\triangledown u| (\log(e+|\triangledown u|))^{\frac{q(\cdot)}{p^-}}||_{L^{p(\cdot)}(\mathbb{R}^n)} \leq 1.
 \end{equation}
\end{lemma}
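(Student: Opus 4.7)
The plan is to mirror the proof of Lemma \ref{main lemma 3}, replacing $p^+$ by $p^-$ and carefully tracking signs. By the definition of the Luxemburg norm on $L^{p(\cdot)}(\mathbb{R}^n)$, it suffices to show that the modular satisfies
\begin{equation*}
\rho_{p(\cdot)}\!\left(|\triangledown u|\,(\log(e+|\triangledown u|))^{q(\cdot)/p^-}\right) = \int_{\mathbb{R}^n} |\triangledown u|^{p(y)}\,(\log(e+|\triangledown u|))^{q(y)p(y)/p^-}\,dy \leq 1,
\end{equation*}
from which the stated norm bound follows immediately.

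The key observation is a sign-aware exponent comparison. Since $q(y) < 0$ and $p(y) \geq p^-$, we have $p(y)/p^- \geq 1$, and multiplying by the negative quantity $q(y)$ reverses the inequality to give $q(y)p(y)/p^- \leq q(y)$. Because $\log(e+t) \geq 1$ for every $t \geq 0$, raising this quantity to a smaller (more negative) exponent yields a smaller value, so
\begin{equation*}
(\log(e+|\triangledown u|))^{q(y)p(y)/p^-} \leq (\log(e+|\triangledown u|))^{q(y)}.
\end{equation*}
Inserting this pointwise bound into the modular integral gives
\begin{equation*}
\rho_{p(\cdot)}\!\left(|\triangledown u|\,(\log(e+|\triangledown u|))^{q(\cdot)/p^-}\right) \leq \int_{\mathbb{R}^n} |\triangledown u|^{p(y)}\log^{q(y)}(e+|\triangledown u|)\,dy = \rho_\Phi(\triangledown u) \leq 1,
\end{equation*}
where the last inequality is the standard consequence of $\|\triangledown u\|_{L^{\Phi(\cdot,\cdot)}(\mathbb{R}^n)} \leq 1$ for a modular of this form. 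This completes the proof.

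The only real subtlety is the direction of the comparison $q(y)p(y)/p^- \leq q(y)$: in Lemma \ref{main lemma 3} the exponent $q(y) > 0$ together with $p(y) \leq p^+$ gave $q(y)p(y)/p^+ \leq q(y)$, whereas here the sign of $q$ is flipped and it is $p(y) \geq p^-$ that must be used instead. Once this is correctly identified, the rest of the argument is verbatim the same, so I expect no further obstacles.
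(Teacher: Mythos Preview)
Your proof is correct and follows essentially the same approach as the paper: both reduce to the modular bound $\rho_{p(\cdot)}\!\left(|\triangledown u|\,(\log(e+|\triangledown u|))^{q(\cdot)/p^-}\right)\le 1$ via the pointwise comparison $(\log(e+|\triangledown u|))^{q(y)p(y)/p^-}\le (\log(e+|\triangledown u|))^{q(y)}$ and then invoke $\|\triangledown u\|_{L^{\Phi}}\le 1$. Your explicit justification of the inequality $q(y)p(y)/p^-\le q(y)$ (using $q(y)<0$ and $p(y)\ge p^-$) is in fact more detailed than the paper, which leaves this step implicit.
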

\begin{proof}
    Since we have $||\triangledown u||_{L^{\Phi(\cdot,\cdot)}(\mathbb{R}^n)} \leq 1$, if we can show that $\rho_{p(\cdot)}(|\triangledown u |(\log(e+|\triangledown u|))^{\frac{q(\cdot)}{p^-}})\leq 1$ then we will get $ ||\,|\triangledown u| (\log(e+|\triangledown u|))^{\frac{q(\cdot)}{p^-}}||_{L^{p(\cdot)}(\mathbb{R}^n)} \leq 1$.\\
    Consider, 
    \begin{eqnarray*}
        \rho_{p(\cdot)}(|\triangledown u| (\log(e+|\triangledown u|))^{\frac{q(\cdot)}{p^-}}) &=& \int_{\Omega} |\triangledown u|^{p(y)} (\log(e+|\triangledown u|))^{\frac{q(y) p(y)}{p^-}} dy\\
        &\leq& \int_{\Omega} |\triangledown u|^{p(y)} (\log^{q(y)}(e+|\triangledown u|)) dy\\
        &\leq & 1,
    \end{eqnarray*}
    since $||\triangledown u||_{L^{\Phi(\cdot,\cdot)}(\mathbb{R}^n)} \leq 1$. Therefore, we get $ ||\,|\triangledown u| (\log(e+|\triangledown u|))^{\frac{q(\cdot)}{p^-}}||_{L^{p(\cdot)}(\mathbb{R}^n)} \leq 1$. \\
\end{proof}

\textbf{Proof of Theorem \ref{main theorem 3}}
 Since $C_{\Phi(\cdot, \cdot)}(E)=0,$ for every $i\in\mathbb{N},$ we have a non-negative function $u_i \in  W^{1,\Phi(\cdot,\cdot)} (\mathbb{R}^n)$ such that $E\subset \interior\{u_i\geq 1\}$ and $||u_i||_{W^{1,\Phi(\cdot,\cdot)} (\mathbb{R}^n)}\leq 2^{-i}$. Define $v_i:= u_1+....+u_i.$ Then $(v_i)$ is a Cauchy sequence and since $W^{1,\Phi(\cdot,\cdot)}(\mathbb{R}^n)$ is a Banach space, there exists $u\in W^{1,\Phi(\cdot,\cdot)}(\mathbb{R}^n)$ such that $v_i\rightarrow u$ and hence
    \begin{center}
        $||u||_{1,\Phi(\cdot,\cdot)}\leq \lim_{i\rightarrow \infty} ||v_i||_{1,\Phi(\cdot,\cdot)}\leq \sum_{i=1}^{\infty}||u_i||_{1,\Phi(\cdot,\cdot)}\leq\sum_{i=1}^{\infty}2^{-i}\leq 1. $
    \end{center}
    Therefore, by Lemma \ref{main lemma 3} we get $ ||\,|\triangledown u| (\log(e+|\triangledown u|))^{\frac{q(\cdot)}{p^+}}||_{L^{p(\cdot)}(\Omega)} \leq 1$.\\
     For every natural number $k$ and $x\in E$ we find $r$ such that $u_{i}(y) \geq 1$ for every $i= 1,....,k$ and almost every $y\in B(x, r),$ which means that  $E\subset \interior\{u\geq m\}$ for all $m\geq 1$. Then for $r$ small enough we get $B(x, r) \subset \interior\{u\geq m\}$, and hence $ \dashint_{B(x, r)} u dy \geq m$. Therefore 
    \begin{equation}\label{main equation 2}
        \limsup_{r\rightarrow0}\dashint_{B(x, r)} u dy = \infty.
    \end{equation}
    Suppose that $x$ is such that 
    \begin{equation}
        \limsup_{r\rightarrow0} r^{\frac{-(n-p(x))}{p(x)}}\log \left(\frac{1}{r}\right)\Vert |\triangledown u| (\log(e+|\triangledown u|))^{\frac{q(\cdot)}{p^+}}\Vert_{L^{p(\cdot)}(B(x,r))}=: C <\infty.
    \end{equation}
     We choose $R\in (0, 1) $ so that, for every $0<r\leq R$,
    \begin{equation*}
        \Vert|\triangledown u| (\log(e+|\triangledown u|))^{\frac{q(\cdot)}{p^+}}\Vert_{L^{p(\cdot)}(B(x,r))}< C r^{\frac{n-p(x)}{p(x)}}\log^{-1}\left(\frac{1}{r}\right).
    \end{equation*}
     Denote $B_i:= B(x, 2^{-i}R)$, $B_i^1:= \{y\in B_i: |\triangledown u(y)| \leq (2^{-i}R)^{\frac{-1}{2}}\}$ and $B_i^2:= \{y\in B_i: |\triangledown u(y)| > (2^{-i}R)^{\frac{-1}{2}}\}$. Then using  the classical Poincar\'e inequality and the H\"older inequality \eqref{holder inequality}, we have
    \begin{eqnarray*}
        & &|{ u}_{B_{i+1}} - { u}_{B_{i}}|\\
        &\leq& \dashint_{B_{i+1}}|u-{ u}_{B_{i}}| dy\\ &\leq& c2^{(i+1)n} \int_{B_i}|u-{ u}_{B_{i}}| dy\\
        &\leq& c2^{-i(1-n)} \int_{B_i} |\triangledown u| dy \\
        &=& c2^{-i(1-n)} \int_{B_i^1} |\triangledown u| dy+ c2^{-i(1-n)} \int_{B_i^2} |\triangledown u| dy\\
        &\leq& c2^{-i(1-n)}(2^{-i}R)^{\frac{-1}{2}}(2^{-i}R)^n + \frac{c2^{-i(1-n)}}{(\log(e+(2^{-i}R)^{\frac{-1}{2}}))^{\frac{q^-}{p^+}}} \int_{B_i^2} |\triangledown u| (\log(e+|\triangledown u|))^{\frac{q(y)}{p^+}} dy\\
        &\leq& c2^{\frac{-i}{2}} R^{n-\frac{1}{2}} + \frac{c2^{-i(1-n)}}{(\log(e+(2^{-i}R)^{\frac{-1}{2}}))^{\frac{q^-}{p^+}}}||1||_{L^{p'(\cdot)}(B_i)} \Vert|\triangledown u| (\log(e+|\triangledown u|))^{\frac{q(y)}{p^+}}\Vert_{L^{p(\cdot)}(B_i)}.
    \end{eqnarray*}
     Lemma \ref{norm of characteristic} gives $||1||_{L^{p'(\cdot)}(B_i)}\approx |B_i|^{\frac{1}{p'_{B_i}}}\leq |B_i|^{\frac{1}{{p'_{B_i}}^+}}$ and by Lemma \ref{logholder continuous} we have $|B_i|^{\frac{1}{{p'_{B_i}}^+}} \leq c |B_i|^{\frac{1}{p'(x)}} $. Therefore,
     $ ||1||_{L^{p'(\cdot)}(B_i)}\leq c(2^{-i}R)^{\frac{np(x)-n}{p(x)}}$.
      Hence, for every $i \in \mathbb{N}$, 
     \begin{eqnarray*}
        |{ u}_{B_{i+1}} - { u}_{B_{i}}| &\leq & c2^{\frac{-i}{2}} R^{n-\frac{1}{2}}+ c 2^{-i(1-n+n-\frac{n}{p(x)}+\frac{n-p(x)}{p(x)})}\frac{\log^{-1}\left(\frac{1}{2^{-i}}\right)}{(\log(e+(2^{-i}R)^{\frac{-1}{2}}))^{\frac{q^-}{p^+}}} \\ 
       & = & c2^{\frac{-i}{2}} R^{n-\frac{1}{2}}+c\frac{\log^{-1}\left(\frac{1}{2^{-i}}\right)}{(\log(e+(2^{-i}R)^{\frac{-1}{2}}))^{\frac{q^-}{p^+}}}.
    \end{eqnarray*}
    Therefore, for $k>j $ we obtain 
    \begin{eqnarray*}
       |{u}_{B_{k}} - { u}_{B_{j}}| &\leq & \sum_{i=j}^{k-1}|{ u}_{B_{i+1}} - { u}_{B_{i}}|\\
       &\leq & c\sum_{i=j}^{k-1}\bigg[2^{\frac{-i}{2}} R^{n-\frac{1}{2}}+\frac{\log^{-1}\left(\frac{1}{2^{-i}}\right)}{(\log(e+(2^{-i}R)^{\frac{-1}{2}}))^{\frac{q^-}{p^+}}}\bigg]\\
       &\leq &  c \sum_{i=j}^{k-1} \bigg[ 2^{\frac{-i}{2}}R^{n-\frac{1}{2}} + \frac{\log^{-1}(2)}{i^{(1+\frac{q^-}{p^+})}\log^{\frac{q^-}{p^+}}(e+2R)} \bigg]
    \end{eqnarray*}
     which shows that $({ u}_{B_{i}})$ is a Cauchy sequence, since the above series is finite. Hence we have  
    \begin{equation*}
        \limsup_{r\rightarrow0}\dashint_{B(x, r)} u dy < \infty
    \end{equation*}
    which together with \eqref{main equation 2} implies that $x\notin E$. Thus, we can write
    \begin{equation*}
        E\subset\{ x\in \mathbb{R}^{n}:\limsup_{r\rightarrow0} r^{\frac{-(n-p(x))}{p(x)}}\log \left(\frac{1}{r}\right)\Vert |\triangledown u| (\log(e+|\triangledown u|))^{\frac{q(\cdot)}{p^+}}\Vert_{L^{p(\cdot)}(B(x,r))}= \infty\}. 
    \end{equation*}
    For $r\leq 1$ we obtain by Lemma \ref{from logholder continuity} that 
    \begin{equation*}
    r^{\frac{-(n-p(x))}{p(x)}}\log \left(\frac{1}{r}\right) \leq c r^{\frac{-(n-p(x))}{p(y)}}\log \left(\frac{1}{r}\right)
    \end{equation*}
     for every $y\in B(x, r)$. Therefore
     \begin{eqnarray*}
       & & r^{\frac{-(n-p(x))}{p(x)}}\log \left(\frac{1}{r}\right)\Vert |\triangledown u| (\log(e+|\triangledown u|))^{\frac{q(\cdot)}{p^+}}\Vert_{L^{p(\cdot)}(B(x,r))}\\
       &\leq & c ||r^{\frac{-(n-p(x))}{p(\cdot)}}\log \left(\frac{1}{r}\right) |\triangledown u| (\log(e+|\triangledown u|))^{\frac{q(\cdot)}{p^+}}||_{L^{p(\cdot)}(B(x,r))}.
    \end{eqnarray*}
    Hence 
    \begin{equation*}
        E\subset\{ x\in \mathbb{R}^{n}:\limsup_{r\rightarrow0} ||r^{\frac{-(n-p(x))}{p(\cdot)}}\log \left(\frac{1}{r}\right)  |\triangledown u| (\log(e+|\triangledown u|))^{\frac{q(\cdot)}{p^+}}||_{L^{p(\cdot)}(B(x,r))}= \infty\}. 
    \end{equation*}
    Note that 
    \begin{eqnarray*}
        & &\int_{B(x, r)}r^{\frac{-p(y)(n-p(x))}{p(y)}}\log^{p(y)}\left(\frac{1}{r}\right)  (|\triangledown u| \log^{\frac{q(y)}{p^+}}(e+|\triangledown u|))^{p(y)} dy \\
     &\leq& r^{p(x)-n}\log^{p^+} \left(\frac{1}{r}\right)\int_{B(x, r)} (|\triangledown u| (\log(e+|\triangledown u|))^{\frac{q(y)}{p^+}})^{p(y)} dy\\
    \end{eqnarray*}
    and the norm is finite if and only if the modular is, and hence we conclude 
    \begin{equation*}
        E\subset\{ x\in \mathbb{R}^{n}:\limsup_{r\rightarrow0} r^{p(x)-n}\log^{p^+} \left(\frac{1}{r}\right)\int_{B(x, r)} (|\triangledown u| (\log(e+|\triangledown u|))^{\frac{q(y)}{p^+}})^{p(y)} dy = \infty\}.
    \end{equation*}
    Therefore, the claim follows from Lemma \ref{main lemma}.\qed\\

\textbf{Proof of Theorem \ref{main thorem 4}}
     Since $C_{\Phi(\cdot, \cdot)}(E)=0,$ for every $i\in\mathbb{N},$ we have a non-negative function $u_i \in  W^{1,\Phi(\cdot,\cdot)} (\mathbb{R}^n)$ such that $E\subset \interior\{u_i\geq 1\}$ and $||u_i||_{W^{1,\Phi(\cdot,\cdot)} (\mathbb{R}^n)}\leq 2^{-i}$. Define $v_i:= u_1+....+u_i.$ Then $(v_i)$ is a Cauchy sequence and since $W^{1,\Phi(\cdot,\cdot)}(\mathbb{R}^n)$ is a Banach space, there exists $u\in W^{1,\Phi(\cdot,\cdot)}(\mathbb{R}^n)$ such that $v_i\rightarrow u$ and hence
    \begin{center}
        $||u||_{1,\Phi(\cdot,\cdot)}\leq \lim_{i\rightarrow \infty} ||v_i||_{1,\Phi(\cdot,\cdot)}\leq \sum_{i=1}^{\infty}||u_i||_{1,\Phi(\cdot,\cdot)}\leq\sum_{i=1}^{\infty}2^{-i}\leq 1. $
    \end{center}
    Therefore, by Lemma \ref{main lemma 4} we get $ ||\,|\triangledown u| (\log(e+|\triangledown u|))^{\frac{q(\cdot)}{p^-}}||_{L^{p(\cdot)}(\Omega)} \leq 1$.\\
    
     For every natural number $k$ and $x\in E$ we find $r$ such that $u_{i}(y) \geq 1$ for every $i= 1,....,k$ and almost every $y\in B(x, r),$ which means that  $E\subset \interior\{u\geq m\}$ for all $m\geq 1$. Then for $r$ small enough we get $B(x, r) \subset \interior\{u\geq m\}$, and hence $ \dashint_{B(x, r)} u dy \geq m$. Therefore 
    \begin{equation}\label{main equation 3}
        \limsup_{r\rightarrow0}\dashint_{B(x, r)} u dy = \infty.
    \end{equation}
    Suppose that $x$ is such that 
    \begin{equation}
        \limsup_{r\rightarrow0} r^{\frac{-(n-p(x))}{p(x)}}\log^{1-\frac{\lambda}{p^-}+\frac{\delta}{p^+}} \left(\frac{1}{r}\right)\Vert |\triangledown u| (\log(e+|\triangledown u|))^{\frac{q(\cdot)}{p^-}}\Vert_{L^{p(\cdot)}(B(x,r))}=: C<\infty.
    \end{equation}
     We choose $R\in (0, 1) $ so that, for every $0<r\leq R$,  
    \begin{equation*}
        \Vert|\triangledown u| (\log(e+|\triangledown u|))^{\frac{q(\cdot)}{p^-}}\Vert_{L^{p(\cdot)}(B(x,r))}< C r^{\frac{n-p(x)}{p(x)}}\log^{-1+\frac{\lambda}{p^-}-\frac{\delta}{p^+}}\left(\frac{1}{r}\right).
    \end{equation*}
     Denote $B_i:= B(x, 2^{-i}R)$. Then using the classical Poincar\'e inequality, Jensen's inequality \{for convex function $\Psi(t) = t \log^{\frac{q^+}{p^-}}(e+t)$\}   and the  H\"older inequality \eqref{holder inequality}, we have  
    \begin{eqnarray*}
        |{ u}_{B_{i+1}} - { u}_{B_{i}}|
        &\leq& \dashint_{B_{i+1}}|u-{ u}_{B_{i}}| dy\\ &\leq& c2^{(i+1)n} \int_{B_i}|u-{ u}_{B_{i}}| dy\\
        &\leq& c2^{-i} \dashint_{B_i} |\triangledown u| dy \\
        &\leq & c2^{-i} \Psi^{-1} \big( \dashint_{B_i} \Psi(|\triangledown u|) dy \big)\\
        &= & c2^{-i} \frac{\dashint_{B_i} \Psi(|\triangledown u|) dy }{\log^{\frac{q^+}{p^-}}(e+ \dashint_{B_i} \Psi(|\triangledown u|) dy)}\\
        &\leq & c2^{-i(1-n)} \frac{\int_{B_i} \Psi(|\triangledown u|) dy }{\log^{\frac{q^+}{p^-}}(e+2^{-in})}\\
        &\leq & \frac{c2^{-i(1-n)}}{(\log(e+2^{-in}))^{\frac{q^+}{p^-}}}||1||_{L^{p'(\cdot)}(B_i)} \Vert|\triangledown u| (\log(e+|\triangledown u|))^{\frac{q(\cdot)}{p^-}}\Vert_{L^{p(\cdot)}(B_i)}.
        \end{eqnarray*}
        Here $\Psi^{-1}$ is the generalized inverse of $\Psi$ and we know that $\Psi^{-1}(t) \approx \frac{t}{\log^{\frac{q^+}{p^-}}(e+t)}$ .\\
     Lemma \ref{norm of characteristic} gives $||1||_{L^{p'(\cdot)}(B_i)}\approx |B_i|^{\frac{1}{p'_{B_i}}}\leq |B_i|^{\frac{1}{{p'_{B_i}}^+}}$ and by Lemma \ref{logholder continuous} we have   $|B_i|^{\frac{1}{{p'_{B_i}}^+}} \leq c |B_i|^{\frac{1}{p'(x)}} $. Therefore we get 
    $ ||1||_{L^{p'(\cdot)}(B_i)}\leq c(2^{-i}R)^{\frac{np(x)-n}{p(x)}}$.
    Hence, for every $i \in \mathbb{N}$,
     \begin{equation*}
        |{ u}_{B_{i+1}} - { u}_{B_{i}}|\leq  c 2^{-i(1-n+n-\frac{n}{p(x)}+\frac{n-p(x)}{p(x)})}\frac{\log^{-1+\frac{q^+}{p^-}-\frac{\delta}{p^+}}\left(\frac{1}{2^{-i}}\right)}{(\log(e+2^{-in}))^{\frac{q^+}{p^-}}} \\ 
        = c\frac{\log^{-1+\frac{q^+}{p^-}-\frac{\delta}{p^+}}\left(\frac{1}{2^{-i}}\right)}{(\log(e+2^{-in}))^{\frac{q^+}{p^-}}}.
    \end{equation*}
    Therefore, for $k>j $ we obtain 
    \begin{equation*}
       |{ u}_{B_{k}} - { u}_{B_{j}}| \leq \sum_{i=j}^{k-1}|{ u}_{B_{i+1}} - { u}_{B_{i}}|\leq c\sum_{i=j}^{k-1}\frac{\log^{-1+\frac{q^+}{p^-}-\frac{\delta}{p^+}}\left(\frac{1}{2^{-i}}\right)}{(\log(e+2^{-in}))^{\frac{q^+}{p^-}}}\\
       \leq  c \sum_{i=j}^{k-1} \frac{\log^{-1+\frac{q^+}{p^-}-\frac{\delta}{p^+}}(2)}{i^{(1+\frac{\delta}{p^+})}\log^{\frac{q^+}{p^-}}(e+2^{-n})}  
    \end{equation*}
     which shows that $({ u}_{B_{i}})$ is a Cauchy sequence since the above series is finite. Hence we have 
    \begin{equation*}
        \limsup_{r\rightarrow0}\dashint_{B(x, r)} u dy < \infty
    \end{equation*}
    which together with \eqref{main equation 3} implies that $x\notin E$. Thus we can write 
     \begin{equation*}
        E\subset\{ x\in \mathbb{R}^{n}:\limsup_{r\rightarrow0} r^{\frac{-(n-p(x))}{p(x)}}\log^{1-\frac{q^+}{p^-}+\frac{\delta}{p^+} }\left(\frac{1}{r}\right)\Vert |\triangledown u| (\log(e+|\triangledown u|))^{\frac{q(\cdot)}{p^-}}\Vert_{L^{p(\cdot)}(B(x,r))}= \infty\}. 
    \end{equation*}
    For $r\leq 1$ we obtain by Lemma \ref{from logholder continuity} that 
    \begin{equation*}
    r^{\frac{-(n-p(x))}{p(x)}}\log^{1-\frac{q^+}{p^-}+\frac{\delta}{p^+} } \left(\frac{1}{r}\right) \leq c r^{\frac{-(n-p(x))}{p(y)}}\log^{1-\frac{q^+}{p^-}+\frac{\delta}{p^+} } \left(\frac{1}{r}\right)
    \end{equation*}
    for every $y\in B(x, r)$. Therefore
      \begin{eqnarray*}
        & & r^{\frac{-(n-p(x))}{p(x)}}\log^{1-\frac{q^+}{p^-}+\frac{\delta}{p^+} } \left(\frac{1}{r}\right)\Vert |\triangledown u| (\log(e+|\triangledown u|))^{\frac{q(\cdot)}{p^-}}\Vert_{L^{p(\cdot)}(B(x,r))}\\
       & \leq& c ||r^{\frac{-(n-p(x))}{p(\cdot)}}\log^{1-\frac{q^+}{p^-}+\frac{\delta}{p^+} } \left(\frac{1}{r}\right) |\triangledown u| (\log(e+|\triangledown u|))^{\frac{q(\cdot)}{p^-}}||_{L^{p(\cdot)}(B(x,r))}.\\
     \end{eqnarray*}
    Hence 
    \begin{equation*}
        E\subset\{ x\in \mathbb{R}^{n}:\limsup_{r\rightarrow0} ||r^{\frac{-(n-p(x))}{p(\cdot)}}\log^{1-\frac{q^+}{p^-}+\frac{\delta}{p^+} } \left(\frac{1}{r}\right)  |\triangledown u| (\log(e+|\triangledown u|))^{\frac{q(\cdot)}{p^-}}||_{L^{p(\cdot)}(B(x,r))}= \infty\}. 
    \end{equation*}
    Note that  
    \begin{eqnarray*}
        & &\int_{B(x, r)}r^{\frac{-p(y)(n-p(x))}{p(y)}}\log^{p(y)-\frac{q^+ p(y)}{p^-}+\frac{\delta p(y)}{p^+} }\left(\frac{1}{r}\right)  (|\triangledown u| (\log(e+|\triangledown u|))^{\frac{q(y)}{p^-}})^{p(y)} dy \\
     &\leq& r^{p(x)-n}\log^{p^+-\frac{q^+ p^+}{p^-}+\delta} \left(\frac{1}{r}\right)\int_{B(x, r)} (|\triangledown u| (\log(e+|\triangledown u|))^{\frac{q(y)}{p^-}})^{p(y)} dy\\
    \end{eqnarray*}
    and the norm is finite if and only if the modular is, and hence we conclude 
    \begin{equation*}
        E\subset\{ x\in \mathbb{R}^{n}:\limsup_{r\rightarrow0} r^{p(x)-n}\log^{p^+-\frac{q^+ p^+}{p^-}+\delta} \left(\frac{1}{r}\right)\int_{B(x, r)} (|\triangledown u| (\log(e+|\triangledown u|))^{\frac{q(y)}{p^-}})^{p(y)} dy = \infty\}.
    \end{equation*}
    Therefore, the claim follows by Lemma \ref{main lemma}.\qed

  \def\bibname{References}
\bibliography{main_variable}
\bibliographystyle{alpha}

  \bigskip

\noindent{\small Ankur Pandey}\\
\small{Department of Mathematics,}\\
\small{Birla Institute of Technology and Science-Pilani, Hyderabad Campus,}\\
\small{Hyderabad-500078, India} \\
{\tt p20210424@hyderabad.bits-pilani.ac.in; pandeyankur600@gmail.com}\\

\noindent{\small Nijjwal Karak}\\
\small{Department of Mathematics,}\\
\small{Birla Institute of Technology and Science-Pilani, Hyderabad Campus,}\\
\small{Hyderabad-500078, India} \\
{\tt nijjwal@gmail.com; nijjwal@hyderabad.bits-pilani.ac.in}\\
\\
\noindent{\small Debarati Mondal}\\
\small{Department of Mathematics,}\\
\small{Birla Institute of Technology and Science-Pilani, Hyderabad Campus,}\\
\small{Hyderabad-500078, India} \\
{\tt  p20200038@hyderabad.bits-pilani.ac.in}\\
\\
\end{document}